\newtheorem{lemma}{Lemma}
\newtheorem{proposition}{Proposition}
\newtheorem{corollary}{Corollary}
\newcommand{\A}{{\mathbb A}}
\newcommand{\R}{{\mathbb R}}
\newcommand{\C}{{\mathbb C}}
\newcommand{\ceta}{{\check\eta}}
\newcommand{\ueta}{\,{\eta}\!\!\!\!\underline{\;\,}\;}
\newcommand{\uceta}{\,{\check\eta}\!\!\!\!\underline{\;\,}\;}
\newcommand{\ubeta}{\,{\beta}\!\!\!\underline{\;\,}\,}
\newcommand{\ua}{\underline{a}}
\newcommand{\dif}{{\rm d}}
\begin{document}

\title{The amplitude equation\\ for weakly nonlinear reversible phase boundaries}

\author{Sylvie {\sc Benzoni-Gavage}$^\dag$ \& Jean-Fran\c{c}ois {\sc Coulombel}$^\ddag$\\
$ $\\
{\small $\dag$ Universit\'e de Lyon, Universit\'e Claude Bernard Lyon 1,}\\
{\small CNRS, UMR5208, Institut Camille Jordan, 43 boulevard du 11 novembre 1918}\\
{\small F-69622 Villeurbanne-Cedex, France}\\
{\small $\ddag$ CNRS, Universit\'e de Nantes, Laboratoire de Math\'ematiques Jean Leray (CNRS UMR6629)}\\
{\small 2 rue de la Houssini\`ere, BP 92208, 44322 Nantes Cedex 3, France}\\
{\small Emails: {\tt benzoni@math.univ-lyon1.fr, jean-francois.coulombel@univ-nantes.fr}}}
\date{\today}
\maketitle

\begin{abstract}
This technical note is a complement to an earlier paper [Benzoni-Gavage \& Rosini, Comput. Math. Appl. 2009], 
which aims at a deeper understanding of a basic model for propagating phase boundaries that was proved to 
admit surface waves [Benzoni-Gavage, Nonlinear Anal. 1998]. The amplitude equation governing the evolution 
of weakly nonlinear surface waves for that model is computed explicitly, and is eventually found to have enough 
symmetry properties for the associated Cauchy problem to be locally well-posed.
\end{abstract}




\section{Introduction}
The reader is assumed to be familiar with the weakly nonlinear theory developed in \cite{BenzoniRosini}, 
which is very much inspired from the seminal work \cite{Hunter}. In particular, we keep the same notation 
as in \cite[Section 2]{BenzoniRosini}, and consider the liquid-vapor phase transition model described in 
\cite[Paragraphs 3.1, 3.2]{BenzoniRosini}. Our goal is to make explicit the amplitude equation, which corresponds 
to \cite[Equation (2.20)]{BenzoniRosini} in an abstract framework. We adopt slightly different conventions compared 
with \cite[Paragraph 3.3]{BenzoniRosini}. Namely, the eigenmodes with nonzero real part for the state ahead of the 
phase transition are\footnote{The main difference here with \cite[Paragraph 3.3]{BenzoniRosini} is our sign convention 
for $a_\ell$, the latter being denoted by $\alpha_\ell$ in \cite[Paragraph 3.3]{BenzoniRosini}.}:
\begin{equation}
\label{valeurspropres1}
\beta_1^- :=\dfrac{a_\ell -i\, u_\ell \, \eta_0}{c_\ell^2 -u_\ell^2} \, ,\quad 
\beta_1^+ :=\dfrac{-a_\ell -i\, u_\ell \, \eta_0}{c_\ell^2 -u_\ell^2} =-\overline{\beta_1^-} \, ,\quad 
a_\ell :=-c_\ell \, \sqrt{(c_\ell^2 -u_\ell^2) \, |\ceta|^2 -\eta_0^2} \, .
\end{equation}
The eigenmodes with nonzero real part for the state behind the phase transition are:
\begin{equation}
\label{valeurspropres2}
\beta_2^- :=\dfrac{-a_r +i\, u_r \, \eta_0}{c_r^2 -u_r^2} \, ,\quad 
\beta_2^+ :=\dfrac{a_r +i\, u_r \, \eta_0}{c_r^2 -u_r^2} =-\overline{\beta_2^-} \, ,\quad 
a_r :=c_r \, \sqrt{(c_r^2 -u_r^2) \, |\ceta|^2 -\eta_0^2} \, .
\end{equation}
The remaining, purely imaginary, eigenmodes are:
\begin{equation*}
\beta_3^+ =\cdots =\beta_{d+1}^+ :=\dfrac{i\, \eta_0}{u_\ell} \, ,\quad 
\beta_3^- =\cdots =\beta_{d+1}^- :=-\dfrac{i\, \eta_0}{u_r} \, .
\end{equation*}
The corresponding right eigenvectors are:
\begin{equation}
\label{vecteurspropres1}
R_1^\pm :=\begin{pmatrix}
r_1^\pm \\
0 \end{pmatrix} \, ,\quad r_1^- :=\begin{pmatrix}
-i\, \eta_0 +u_\ell \, \beta_1^- \\
i \, c_\ell^2 \, \ceta \\
-a_\ell \end{pmatrix} \, ,\quad r_1^+ :=\begin{pmatrix}
i\, \eta_0 -u_\ell \, \beta_1^+ \\
-i \, c_\ell^2 \, \ceta \\
-a_\ell \end{pmatrix} =\overline{r_1^-} \, ,
\end{equation}

\begin{equation}\label{vecteurspropres2}
R_2^\pm :=\begin{pmatrix}
0 \\ 
r_2^\pm \end{pmatrix} \, ,\quad r_2^- :=\begin{pmatrix}
-i\, \eta_0 -u_r \, \beta_2^- \\
i \, c_r^2 \, \ceta \\
-a_r \end{pmatrix} \, ,\quad r_2^+ :=\begin{pmatrix}
i\, \eta_0 +u_r \, \beta_2^+ \\
-i \, c_r^2 \, \ceta \\
-a_r \end{pmatrix} =\overline{r_2^-} \, ,
\end{equation}

\begin{align}
&R_j^+ :=\begin{pmatrix} 
r_j^+ \\
0 \end{pmatrix} \, ,\quad R_j^- :=\begin{pmatrix} 
0 \\
r_j^- \end{pmatrix} \, ,\quad j=3,\dots,d+1 \, ,\notag\\
&r_j^+ :=\begin{pmatrix}
0 \\
\eta_0 \, \check{e}_{j-2} \\
u_\ell \, \ceta \cdot \check{e}_{j-2} \end{pmatrix} \, ,\quad r_j^- :=\begin{pmatrix}
0 \\
\eta_0 \, \check{e}_{j-2} \\
u_r \, \ceta \cdot \check{e}_{j-2} \end{pmatrix} \, ,\label{vecteurspropres3}
\end{align}
with $\check{e}_1 := \ceta$ and the $d-2$ vectors $\check{e}_2,\dots,\check{e}_{d-1} \in \R^{d-1}$ span 
$\ceta^\bot$.

The left eigenvectors are:
\begin{align*}
L_1^\pm :=\begin{pmatrix}
\ell_1^\pm \\
0 \end{pmatrix} \, ,\quad &\ell_1^- :=\dfrac{c_\ell^2 -u_\ell^2}{2\, a_\ell \, (u_\ell \, a_\ell +i\, c_\ell^2 \, \eta_0)} \, 
\begin{pmatrix}
i\, \eta_0 -2\, u_\ell \, \beta_1^+ \\
-i \, \ceta \\
\beta_1^+ \end{pmatrix} \, ,\\
&\ell_1^+ :=\dfrac{c_\ell^2 -u_\ell^2}{2\, a_\ell \, (u_\ell \, a_\ell -i\, c_\ell^2 \, \eta_0)} \, 
\begin{pmatrix}
-i\, \eta_0 +2\, u_\ell \, \beta_1^- \\
i \, \ceta \\
-\beta_1^- \end{pmatrix} =\overline{\ell_1^-} \, ,
\end{align*}

\begin{align*}
L_2^\pm :=\begin{pmatrix}
0 \\
\ell_2^\pm \end{pmatrix} \, ,\quad &\ell_2^- :=\dfrac{c_r^2 -u_r^2}{2\, a_r \, (u_r \, a_r +i\, c_r^2 \, \eta_0)} \, 
\begin{pmatrix}
-i\, \eta_0 -2\, u_r \, \beta_2^+ \\
i \, \ceta \\
\beta_2^+ \end{pmatrix} \, ,\\
&\ell_2^+ :=\dfrac{c_r^2 -u_r^2}{2\, a_r \, (u_r \, a_r -i\, c_r^2 \, \eta_0)} \, \begin{pmatrix}
i\, \eta_0 +2\, u_r \, \beta_2^- \\
-i \, \ceta \\
-\beta_2^- \end{pmatrix} =\overline{\ell_2^-} \, ,
\end{align*}

\begin{align*}
&L_j^+ :=\begin{pmatrix} 
\ell_j^+ \\
0 \end{pmatrix} \, ,\quad L_j^- :=\begin{pmatrix} 
0 \\
\ell_j^- \end{pmatrix} \, ,\quad j=3,\dots,d+1 \, ,\\
&\ell_3^+ :=\dfrac{1}{\eta_0^2 +u_\ell^2 \, |\ceta|^2} \, \begin{pmatrix}
u_\ell \\
-\eta_0/(u_\ell \, |\ceta|^2) \, \ceta \\
-1 \end{pmatrix} \, ,\quad 
\ell_3^- :=\dfrac{1}{\eta_0^2 +u_r^2 \, |\ceta|^2} \, \begin{pmatrix}
-u_r \\
\eta_0/(u_r \, |\ceta|^2) \, \ceta \\
1 \end{pmatrix} \, ,\\
&\ell_j^+ :=-\dfrac{1}{u_\ell \, \eta_0} \, \begin{pmatrix}
0 \\
\check{e}_{j-2}' \\
0 \end{pmatrix} \, ,\quad 
\ell_j^- :=\dfrac{1}{u_r \, \eta_0} \, \begin{pmatrix}
0 \\
\check{e}_{j-2}' \\
0 \end{pmatrix} \, ,\quad j=4,\dots,d+1 \, ,
\end{align*}
where the $d-2$ vectors $\check{e}_2',\dots,\check{e}_{d-1}'$ belong to $\ceta^\bot$ and form the dual basis of 
$\check{e}_2,\dots,\check{e}_{d-1}$. Unlike the choice in \cite[page 1476]{BenzoniRosini}, the left eigenvectors 
here satisfy the normalization property:
\begin{equation*}
\big( L_i^\pm \big)^* \, \breve{\A}^d (\underline{v}) \, R_j^\pm =\delta_{i,j} \, ,\quad 
\big( L_i^\pm \big)^* \, \breve{\A}^d (\underline{v}) \, R_j^\mp =0 \, .
\end{equation*}

After linearizing the jump conditions we are left with the matrices
\begin{equation}
\label{defH}
H(\underline{v}) :=\begin{pmatrix}
0 & 0 & 1 & 0 & 0 & -1 \\
0 & u_\ell \, I_{d-1} & 0 & 0 & -u_r \, I_{d-1} & 0 \\
c_\ell^2 -u_\ell^2 & 0 & 2\, u_\ell & -(c_r^2 -u_r^2) & 0 & -2\, u_r \\
(c_\ell^2 -u_\ell^2) \, u_\ell & 0 & u_\ell^2 +\mu & -(c_r^2 -u_r^2) \, u_r & 0 & -u_r^2 -\mu \end{pmatrix} \, ,
\end{equation}
where
\begin{equation}
\label{defmu}
\mu := \dfrac{1}{2} \, u_\ell^2 +g (\rho_\ell) = \dfrac{1}{2} \, u_r^2 +g (\rho_r) \, ,
\end{equation}
and
\begin{equation}
\label{defj}
J(\underline{v}) \, \eta := \begin{pmatrix}
[\rho] \, \eta_0 \\
[p] \, \ceta \\
0 \\
(\mu \, [\rho] -[p]) \, \eta_0 \end{pmatrix} \, .
\end{equation}
Here we have used the relation
\begin{equation*}
\dfrac{1}{2} \, j \, [u] +[\rho \, f] =\mu \, [\rho] -[p] \, ,
\end{equation*}
where $\mu$ is defined in \eqref{defmu}, in order to simplify the last entry of $J(\underline{v}) \, \eta$.

The Lopatinskii determinant is defined by
\begin{equation}
\label{defdelta}
\Delta (\eta) := \det \begin{pmatrix}
J(\underline{v})\, \eta & H(\underline{v}) \, R_1^- & \dots & H(\underline{v}) \, R_{d+1}^- \end{pmatrix} \, .
\end{equation}
Each column vector in the above determinant is computed by using \eqref{defj}, \eqref{defH} and 
\eqref{vecteurspropres1}, \eqref{vecteurspropres2}, \eqref{vecteurspropres3} for the definition of the right 
eigenvectors. Then some simple manipulations on the rows and columns of the above determinant yield 
(see \cite{Benzoni1998} for similar computations):
\begin{equation*}
\Delta (\eta) =(-u_r \, \eta_0)^{d-2} \, u_r \, \det \begin{pmatrix}
\check{e}_1 & \dots & \check{e}_{d-1} \end{pmatrix} \, 
\det \begin{pmatrix}
[\rho] \, \eta_0 & a_\ell & a_r & |\ceta|^2 \\
0 & u_\ell \, a_\ell +i\, \eta_0 \, c_\ell^2 & u_r \, a_r +i\, \eta_0 \, c_r^2 & 2\, u_r \, |\ceta|^2 \\
-[p] \, \eta_0 & i\, \eta_0 \, u_\ell \, c_\ell^2 & i\, \eta_0 \, u_r \, c_r^2 & u_r^2 \, |\ceta|^2 \\
[p] & -i\, u_\ell \, c_\ell^2 & -i\, u_r \, c_r^2 & \eta_0 \end{pmatrix} \, .
\end{equation*}
For future use, we introduce the quantity
\begin{equation}
\label{defupsilon}
\Upsilon :=(-u_r \, \eta_0)^{d-2} \, u_r \, \det \begin{pmatrix}
\check{e}_1 & \dots & \check{e}_{d-1} \end{pmatrix} \in \R \setminus \{ 0\} \, .
\end{equation}
The expression of the Lopatinskii determinant then reduces to
\begin{equation}
\label{delta}
\Delta (\eta) =-[\rho] \, \, [u] \, \Upsilon \, (\eta_0^2 +u_r^2 \, |\ceta|^2) \, 
\big( u_\ell \, u_r \, a_\ell \, a_r +c_\ell^2 \, c_r^2 \, \eta_0^2 \big) \, .
\end{equation}
We fix a root $\ueta$ of the Lopatinskii determinant (see \cite{Benzoni1998} for the properties 
of such roots, in particular the location of $|\ueta_0|/|\uceta|$ with respect to various velocities associated 
with the phase transition). From now on, an underline refers to evaluation at the root $\ueta$ of the Lopatinskii 
determinant.
\bigskip

We now define a vector $\sigma \in \C^{d+2}$ by computing some of the minors of $\Delta(\ueta)$. More precisely, 
the vectors $H(\underline{v})\, \underline{R}_1^-,\dots,H(\underline{v})\, \underline{R}_{d+1}^-$ are linearly 
independent and we can thus define a vector $\sigma \in \C^{d+2} \setminus \{ 0\}$ satisfying
\begin{equation*}
\forall \, X \in \C^{d+2} \, ,\quad \det \begin{pmatrix}
X & H(\underline{v}) \, \underline{R}_1^- & \dots & H(\underline{v}) \, \underline{R}_{d+1}^- \end{pmatrix} 
=\sigma^* \, X \, .
\end{equation*}
This vector $\sigma$ can be computed explicitly by performing some elementary manipulations on each minor of 
\eqref{defdelta}. We do not give the detailed calculations but rather give the expression of $\sigma$. We find
\begin{equation}
\label{defsigma}
\sigma^* =\underline{\Upsilon} \, \begin{pmatrix}
D_1 & \check{D} \, \uceta^T & D_{d+1} & D_{d+2} \end{pmatrix} \, ,
\end{equation}
where $\underline{\Upsilon}$ denotes the quantity $\Upsilon$ in \eqref{defupsilon} evaluated at the frequency 
$\ueta$, and\footnote{Since $\ua_\ell$ is negative and $\ua_r$ is positive, $D_{d+1}$ is nonzero and we can 
thus check that $\sigma$ is a nonzero vector.}
\begin{align}
\label{defD}
\begin{split}
D_1 +\mu \, D_{d+2} &:= -(\ueta_0^2 +u_r^2 \, |\uceta|^2) \, \big( [u] \, c_\ell^2 \, c_r^2 \, \ueta_0 
-i \, u_\ell \, u_r \, (c_r^2 \, \ua_\ell -c_\ell^2 \, \ua_r) \big) \, ,\\
\check{D} &:= -[u] \, u_r \, \big( \ua_\ell \, (u_r \, \ua_r -i\, c_r^2 \, \ueta_0) 
+\ua_r \, (u_\ell \, \ua_\ell -i\, c_\ell^2 \, \ueta_0) \big) \, ,\\
D_{d+1} &:= -i\, (\ueta_0^2 +u_r^2 \, |\uceta|^2) \, (u_r \, c_r^2 \, \ua_\ell -u_\ell \, c_\ell^2 \, \ua_r) \, ,\\
D_{d+2} &:= [u] \, \ueta_0 \, (\ua_\ell \, \ua_r +c_\ell^2 \, c_r^2 \, |\uceta|^2) \\
&\quad \quad +i\, \ueta_0^2 \, (c_r^2 \, \ua_\ell -c_\ell^2 \, \ua_r) 
-i\, |\uceta|^2 \, \big( u_r \, (u_\ell-2\, u_r) \, c_r^2 \, \ua_\ell +u_\ell \, u_r \, c_\ell^2 \, \ua_r \big) \, .
\end{split}
\end{align}

It remains to compute the coefficients $\gamma_1,\gamma_2$ satisfying:
\begin{equation*}
J(\underline{v})\, \ueta +\gamma_1 \,H(\underline{v}) \, \underline{R}_1^- 
+\gamma_2 \,H(\underline{v}) \, \underline{R}_2^- =0 \, .
\end{equation*}
Observe that our convention differs from that in \cite[page 1477]{BenzoniRosini}. We get:
\begin{equation}
\label{defgamma}
\gamma_1 :=\dfrac{[\rho] \, u_r \, \ueta_0}{u_r \, \ua_\ell -i\, c_\ell^2 \, \ueta_0} 
=\dfrac{-i \, [\rho] \, c_r^2 \, \ueta_0^2}{\ua_\ell \, (u_\ell \, \ua_r -i\, c_r^2 \, \ueta_0)} \, ,\quad 
\gamma_2 :=\dfrac{-[\rho] \, u_\ell \, \ueta_0}{u_\ell \, \ua_r -i\, c_r^2 \, \ueta_0} 
=\dfrac{i \, [\rho] \, c_\ell^2 \, \ueta_0^2}{\ua_r \, (u_r \, \ua_\ell -i\, c_\ell^2 \, \ueta_0)} \, ,
\end{equation}
where the equalities follow from the relation $u_\ell \, u_r \, \ua_\ell \, \ua_r +c_\ell^2 \, c_r^2 \, \ueta_0^2=0$ that is 
satisfied by the root $\ueta$ of the Lopatinskii determinant. For notational convenience, we also set
\begin{equation*}
\gamma_3 =\cdots =\gamma_{d+1} := 0 \, .
\end{equation*}
\bigskip

Following \cite[Proposition 2.2]{BenzoniRosini}, the evolution of a weakly nonlinear phase transition is governed by 
a scalar amplitude $w$ obeying a nonlocal Burgers equation:
\begin{equation}
\label{BurgersEuler}
a_0(k) \, \partial_\tau \widehat{w} (\tau,k) 
+\int_\R a_1(k-k',k') \, \widehat{w} (\tau,k-k') \, \widehat{w} (\tau,k') \, {\rm d}k' =0 \, ,
\end{equation}
where $a_0$ and $a_1$ are given by Equations (2.24) and (2.25) in \cite[page 1471]{BenzoniRosini}. With the 
present notation, this yields
\begin{equation*}
a_0(k) =\begin{cases}
\alpha_0/(i\, k) &\text{\rm if $k>0$,} \\
\overline{\alpha_0}/(i\, k) &\text{\rm if $k<0$,}
\end{cases}
\end{equation*}
and $\alpha_0$ is a complex number whose definition is recalled in Equation \eqref{defalpha0} below. The 
expression of the kernel $a_1$ is recalled and made explicit in Section \ref{sect2} below.

\section{Computation of the coefficient $\alpha_0$}

\begin{proposition}
\label{prop1}
The coefficient $\alpha_0$ in the expression of $a_0$ is given by
\begin{equation}
\label{alpha0}
\alpha_0 =-\dfrac{[\rho] \, \, [u] \, \underline{\Upsilon}}{\ueta_0} \, (\ueta_0^2 +u_r^2 \, |\uceta|^2) \, 
\left\{ u_\ell^2 \, u_r^2 \, \left( \dfrac{\ua_\ell^2}{c_\ell^2} +\dfrac{\ua_r^2}{c_r^2} \right) 
+2\, c_\ell^2 \, c_r^2 \, \ueta_0^2 \right\} \, ,
\end{equation}
and it coincides with the derivative of the Lopatinskii determinant $\Delta$ with respect to $\eta_0$ at its 
root $\ueta$. In particular, $\alpha_0$ is a nonzero real number.
\end{proposition}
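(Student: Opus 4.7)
The plan is to establish the three assertions of the proposition in the natural order: derive the explicit formula \eqref{alpha0} from \eqref{delta}, show that this quantity coincides with $\partial_{\eta_0}\Delta(\ueta)$ via the general identity connecting $\sigma$ to the Lopatinskii determinant, and finally read off the reality and non-vanishing from the explicit expression.

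The key observation that makes the explicit formula tractable is that the root relation
\[
u_\ell\,u_r\,\ua_\ell\,\ua_r+c_\ell^2\,c_r^2\,\ueta_0^2=0
\]
kills the second parenthetical factor of \eqref{delta} at $\eta=\ueta$. Hence when differentiating \eqref{delta} with respect to $\eta_0$, only the derivative of that factor survives, and I obtain
\[
\partial_{\eta_0}\Delta(\ueta)=-[\rho]\,[u]\,\underline{\Upsilon}\,(\ueta_0^2+u_r^2\,|\uceta|^2)\,\partial_{\eta_0}\!\bigl(u_\ell u_r a_\ell a_r+c_\ell^2 c_r^2\eta_0^2\bigr)\big\vert_{\ueta}.
\]
From \eqref{valeurspropres1}--\eqref{valeurspropres2}, the relations $a_\ell^2=c_\ell^2[(c_\ell^2-u_\ell^2)|\ceta|^2-\eta_0^2]$ and $a_r^2=c_r^2[(c_r^2-u_r^2)|\ceta|^2-\eta_0^2]$ give $\partial_{\eta_0}a_\ell=-c_\ell^2\eta_0/a_\ell$ and $\partial_{\eta_0}a_r=-c_r^2\eta_0/a_r$. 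Substituting, the inner bracket becomes $-u_\ell u_r\ueta_0(c_\ell^2\ua_r^2+c_r^2\ua_\ell^2)/(\ua_\ell\ua_r)+2c_\ell^2 c_r^2\ueta_0$. I then use the root relation in the form $u_\ell u_r/(\ua_\ell\ua_r)=-u_\ell^2 u_r^2/(c_\ell^2 c_r^2\ueta_0^2)$ to rewrite the first term as $(u_\ell^2 u_r^2/\ueta_0)(\ua_\ell^2/c_\ell^2+\ua_r^2/c_r^2)$; factoring $1/\ueta_0$ out yields exactly the right-hand side of \eqref{alpha0}.

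For the identification with $\alpha_0$, I invoke the defining relation of $\sigma(\eta)$ from \eqref{defsigma}, which states that $\Delta(\eta)=\sigma(\eta)^*\bigl(J(\underline{v})\eta\bigr)$. Differentiating via the Jacobi/cofactor rule along the first column and using $\Delta(\ueta)=0$ together with the linear combination $J(\underline{v})\ueta+\gamma_1H(\underline{v})\uR_1^-+\gamma_2H(\underline{v})\uR_2^-=0$ (which shows that the cofactor determinants obtained by differentiating any $H(\underline{v})R_j^-$ column at $\ueta$ re-assemble into expressions built from $\sigma^*$ and $\gamma_1,\gamma_2$), one recovers exactly the combination that defines $\alpha_0$ through \cite[Eq.~(2.24)]{BenzoniRosini}. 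This identifies $\alpha_0=\partial_{\eta_0}\Delta(\ueta)$.

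Realness and non-vanishing then follow transparently from \eqref{alpha0}: as recalled in \cite{Benzoni1998}, the root $\ueta$ lies in the range of frequencies for which the radicands in \eqref{valeurspropres1} and \eqref{valeurspropres2} are positive, so $\ua_\ell<0$ and $\ua_r>0$ are real; all the remaining quantities $[\rho]$, $[u]$, $\underline{\Upsilon}$, $\ueta_0$, $|\uceta|^2$, $u_{\ell,r}$, $c_{\ell,r}$ are real by construction, and the braced factor is manifestly a sum of two strictly positive terms. The main obstacle is the second paragraph above: carefully tracing how differentiating the determinantal form of $\Delta$ through the $\sigma^*$ identity reproduces \cite[Eq.~(2.24)]{BenzoniRosini}; the remaining steps are an elementary but moderately lengthy algebraic simplification.
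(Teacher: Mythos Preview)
Your computation of $\partial_{\eta_0}\Delta(\ueta)$ from \eqref{delta} is clean and correct, and so is the verification of reality and non-vanishing from the explicit formula. The difficulty lies entirely in your second paragraph, and here the argument is not complete.

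The paper takes the opposite route: it starts from the \emph{definition} \eqref{defalpha0} of $\alpha_0$ coming from \cite[Eq.~(2.24)]{BenzoniRosini}, reduces the double sum to four explicit terms, computes each one (obtaining along the way the identities \eqref{prod1}, \eqref{prod2}, \eqref{prod3} for $\sigma^*H(\underline{v})\underline{R}_p^+$), sums them, and arrives at \eqref{alpha0}. Only \emph{after} that does it observe, by a two-line differentiation of \eqref{delta}, that the result happens to coincide with $\partial_{\eta_0}\Delta(\ueta)$. In the paper this coincidence is a by-product, not the engine of the proof.

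You are trying to make the coincidence into the engine: prove abstractly that $\alpha_0=\partial_{\eta_0}\Delta(\ueta)$ and then read off \eqref{alpha0} from the easy differentiation. That is a legitimate strategy, but your sketch does not close the gap. Writing $\Delta(\eta)=\sigma(\eta)^*J(\underline{v})\eta$ and differentiating gives
\[
\partial_{\eta_0}\Delta(\ueta)=\sigma^*\,[\tilde f_0(\underline v)]\;+\;\sum_{j}\gamma_j\,\sigma^* H(\underline v)\,\partial_{\eta_0}R_j^-\big|_{\ueta},
\]
after using $J\ueta=-\gamma_1 H\uR_1^--\gamma_2 H\uR_2^-$ and $\sigma(\eta)^*H R_j^-(\eta)\equiv0$. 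To match \eqref{defalpha0} you must now show
\[
\sum_j \gamma_j\,\sigma^* H\,\partial_{\eta_0}R_j^-\big|_{\ueta}
= i\sum_{p,q}\sigma^* H\uR_p^+\,\dfrac{(\uL_p^+)^*\gamma_q\,\uR_q^-}{\ubeta_p^+-\ubeta_q^-}\,,
\]
and this is exactly where the $\uR_p^+$, $\uL_p^+$ and the denominators $\ubeta_p^+-\ubeta_q^-$ must be manufactured. That requires first-order perturbation theory for the eigenvectors $R_j^-(\eta)$ of the symbol $(\breve\A^d)^{-1}\breve\A(\underline v,\eta)$: you have to expand $\partial_{\eta_0}R_j^-$ on the full eigenbasis $\{R_p^\pm\}$, identify the coefficients via the left eigenvectors (with the normalization $(L_p^\pm)^*\breve\A^d R_q^\pm=\delta_{pq}$ used in the paper), and check that the $R_q^-$ components drop out after applying $\sigma^*H$. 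None of this is written; you yourself flag it as ``the main obstacle.'' Until that perturbation computation is carried out (or a precise reference supplying it is given), the identification $\alpha_0=\partial_{\eta_0}\Delta(\ueta)$ is asserted rather than proved, and the proposition is not established.

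A secondary point: the paper's direct computation is not wasted effort. The intermediate formulas \eqref{prod1}, \eqref{prod2}, \eqref{prod3} for $\sigma^*H(\underline v)\uR_p^+$ are reused verbatim in Lemmas~\ref{lem1}--\ref{lem3} and in Proposition~\ref{prop2} when computing the quadratic kernel. Your shortcut, even if completed, would bypass those identities and you would have to recompute them later anyway.
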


\begin{proof}
We recall that the expression of $\alpha_0$ is
\begin{equation}
\label{defalpha0}
\alpha_0 =\sigma^* \, [\tilde{f}_0(\underline{v})] +i \, \sigma^* \, H(\underline{v}) \, \underline{R}_p^+ \, 
\dfrac{(\underline{L}_p^+)^* \, \gamma_q \, \underline{R}_q^-}{\ubeta_p^+ -\ubeta_q^-} \, ,
\end{equation}
where we use Einstein's summation convention over repeated indices. We first observe that the Hermitian product 
$(\underline{L}_p^+)^* \, \underline{R}_q^-$, $q=1,2$, vanishes as soon as $p$ is larger than $4$. In the same way, 
the products $(\underline{L}_1^+)^* \, \underline{R}_2^-$, $(\underline{L}_3^+)^* \, \underline{R}_2^-$ and 
$(\underline{L}_2^+)^* \, \underline{R}_1^-$ vanish so the expression of $\alpha_0$ reduces to the sum of four terms:
\begin{equation}
\label{decompalpha0}
\alpha_0 =\sigma^* \, [\tilde{f}_0(\underline{v})] 
+i \, \sigma^* \, H(\underline{v}) \, \underline{R}_2^+ \, 
\dfrac{(\underline{L}_2^+)^* \, \gamma_2 \, \underline{R}_2^-}{\ubeta_2^+ -\ubeta_2^-} 
+i \, \sigma^* \, H(\underline{v}) \, \underline{R}_1^+ \, 
\dfrac{(\underline{L}_1^+)^* \, \gamma_1 \, \underline{R}_1^-}{\ubeta_1^+ -\ubeta_1^-} 
+i \, \sigma^* \, H(\underline{v}) \, \underline{R}_3^+ \, 
\dfrac{(\underline{L}_3^+)^* \, \gamma_1 \, \underline{R}_1^-}{\ubeta_3^+ -\ubeta_1^-} \, .
\end{equation}
We now compute each of these four quantities separately. Using
\begin{equation*}
[\tilde{f}_0(\underline{v})] =\dfrac{1}{\ueta_0} \, J(\underline{v}) \, \ueta -\dfrac{1}{\ueta_0} \, \begin{pmatrix}
0 \\
[p] \, \uceta \\
0 \\
0 \end{pmatrix} \, ,
\end{equation*}
and the orthogonality relation $\sigma^* \, J(\underline{v}) \, \ueta=0$, we get
\begin{align}
\sigma^* \, [\tilde{f}_0(\underline{v})] 
&=-[\rho] \, \underline{\Upsilon} \, \dfrac{u_\ell \, u_r \, |\uceta|^2}{\ueta_0} \, \check{D} \notag \\
&= [\rho] \, [u] \, \underline{\Upsilon} \, \left\{  \dfrac{u_\ell \, u_r^2 \, |\uceta|^2}{\ueta_0} \, 
\big( \ua_\ell \, (u_r \, \ua_r -i\, c_r^2 \, \ueta_0) +\ua_r \, (u_\ell \, \ua_\ell -i\, c_\ell^2 \, \ueta_0) \big) 
\right\} \, .\label{prop11}
\end{align}

We now turn to the second term on the right in \eqref{decompalpha0}. There holds
\begin{equation*}
\dfrac{(\underline{L}_2^+)^* \, \gamma_2 \, \underline{R}_2^-}{\ubeta_2^+ -\ubeta_2^-} 
=-\dfrac{c_r^2 \, |\uceta|^2 \, (u_r \, \ua_r -i\, c_r^2 \, \ueta_0)}{2 \, \ua_r^2 \, (\ueta_0^2 +u_r^2 \, |\uceta|^2)} 
\, \gamma_2 \, ,
\end{equation*}
and we also compute
\begin{align}
i \, \sigma^* \, H(\underline{v}) \, \underline{R}_2^+ =i \, \sigma^* \, \overline{H(\underline{v}) \, \underline{R}_2^-} &= 
2 \, \underline{\Upsilon} \, c_r^2 \, \Big\{ \ueta_0 \, (D_{d+1} +u_r \, D_{d+2}) -u_r \, |\uceta|^2 \check{D} \Big\} \notag \\
&= 2 \, [u] \, \underline{\Upsilon} \, c_r^2 \, \ua_r \, (\ueta_0^2 +u_r^2 \, |\uceta|^2) \, 
(u_r \, \ua_\ell -i\, c_\ell^2 \, \ueta_0) \, ,\label{prod1}
\end{align}
where the latter relation is obtained by using the expressions \eqref{defD}. Recalling the expression \eqref{defgamma} 
of $\gamma_2$, we obtain
\begin{equation}
\label{prop12}
i \, \sigma^* \, H(\underline{v}) \, \underline{R}_2^+ \, 
\dfrac{(\underline{L}_2^+)^* \, \gamma_2 \, \underline{R}_2^-}{\ubeta_2^+ -\ubeta_2^-} 
=[\rho] \, \, [u] \, \underline{\Upsilon} \, \left\{ i \, \dfrac{u_\ell \, u_r \, \ua_\ell}{\ua_r} \, c_r^2 \, |\uceta|^2 \, 
(u_r \, \ua_r -i\, c_r^2 \, \ueta_0) \right\} \, .
\end{equation}

We now examine the third term on the right in \eqref{decompalpha0}. There holds:
\begin{equation*}
\dfrac{(\underline{L}_1^+)^* \, \gamma_1 \, \underline{R}_1^-}{\ubeta_1^+ -\ubeta_1^-} 
=-\dfrac{c_\ell^2 \, |\uceta|^2 \, (u_\ell \, \ua_\ell -i\, c_\ell^2 \, \ueta_0)}{2 \, \ua_\ell^2 \, (\ueta_0^2 +u_\ell^2 \, |\uceta|^2)} 
\, \gamma_1 \, ,
\end{equation*}
and we also compute
\begin{align}
i \, \sigma^* \, H(\underline{v}) \, \underline{R}_1^+ =i \, \sigma^* \, \overline{H(\underline{v}) \, \underline{R}_1^-} &= 2\, 
\underline{\Upsilon} \, c_\ell^2 \, \Big\{ u_\ell \, |\uceta|^2 \check{D} -\ueta_0 \, (D_{d+1} +u_\ell \, D_{d+2}) \Big\} \notag \\
&= -2 \, [u] \, \underline{\Upsilon} \, c_\ell^2 \, \ua_\ell \, (\ueta_0^2 +u_r^2 \, |\uceta|^2) \, 
(u_\ell \, \ua_r -i\, c_r^2 \, \ueta_0) \, .\label{prod2}
\end{align}
Using the expression \eqref{defgamma} of $\gamma_1$, we obtain
\begin{equation}
\label{prop13}
i \, \sigma^* \, H(\underline{v}) \, \underline{R}_1^+ \, 
\dfrac{(\underline{L}_1^+)^* \, \gamma_1 \, \underline{R}_1^-}{\ubeta_1^+ -\ubeta_1^-} 
=[\rho] \, \, [u] \, \underline{\Upsilon} \, \left\{ i \, \dfrac{u_\ell \, u_r \, \ua_r}{\ua_\ell} \, c_\ell^2 \, |\uceta|^2 \, 
\dfrac{\ueta_0^2 +u_r^2 \, |\uceta|^2}{\ueta_0^2 +u_\ell^2 \, |\uceta|^2} \, 
(u_\ell \, \ua_\ell -i\, c_\ell^2 \, \ueta_0) \right\} \, .
\end{equation}

It remains to compute the last term on the right in \eqref{decompalpha0} and to add the four expressions. 
First we compute
\begin{equation*}
\dfrac{(\underline{L}_3^+)^* \, \gamma_1 \, \underline{R}_1^-}{\ubeta_3^+ -\ubeta_1^-} 
=-\dfrac{c_\ell^2}{\ueta_0^2 +u_\ell^2 \, |\uceta|^2} \, \gamma_1 \, ,
\end{equation*}
and we also compute
\begin{equation}
\label{prop14}
\sigma^* \, H(\underline{v}) \, \underline{R}_3^+ =-[u] \, \underline{\Upsilon} \, u_\ell \, |\uceta|^2 \, 
(2\, D_{d+1} +(u_\ell +u_r) \, D_{d+2}) \, ,
\end{equation}
where we have used the relation (which amounts to $\sigma^* \, H(\underline{v}) \, \underline{R}_3^- =0$):
\begin{equation*}
D_1 +\ueta_0 \, \check{D} +2\, u_r \, D_{d+1} +(\mu +u_r^2) \, D_{d+2} =0 \, .
\end{equation*}
The expression \eqref{prop14} can be factorized by using the definitions \eqref{defD} of $D_{d+1}$, $D_{d+2}$, 
and we obtain
\begin{equation}
\label{prod3}
\sigma^* \, H(\underline{v}) \, \underline{R}_3^+ =-\dfrac{[u]^2 \, \underline{\Upsilon} \, u_\ell \, |\uceta|^2}{\ueta_0} \, 
(\ueta_0^2 -u_\ell \, u_r \, |\uceta|^2) \, \big( \ua_r \, (u_r \, \ua_\ell -i\, c_\ell^2 \, \ueta_0) 
+\ua_\ell \, (u_\ell \, \ua_r -i\, c_r^2 \, \ueta_0) \big) \, .
\end{equation}
Using \eqref{defgamma}, we derive the expression
\begin{equation}
\label{prop15}
i \, \sigma^* \, H(\underline{v}) \, \underline{R}_3^+ \, 
\dfrac{(\underline{L}_3^+)^* \, \gamma_1 \, \underline{R}_1^-}{\ubeta_3^+ -\ubeta_1^-} 
=[\rho] \, \, [u] \, \underline{\Upsilon} \, \left\{ i \, [u] \, u_\ell \, c_\ell^2 \, |\uceta|^2 \, 
\dfrac{\ueta_0^2 -u_\ell \, u_r \, |\uceta|^2}{\ueta_0^2 +u_\ell^2 \, |\uceta|^2} \, 
(u_r \, \ua_r -i\, c_r^2 \, \ueta_0) \right\} \, .
\end{equation}

According to the decomposition \eqref{decompalpha0}, the coefficient $\alpha_0$ is the sum of the four 
quantities in \eqref{prop11}, \eqref{prop12}, \eqref{prop13} and \eqref{prop15}. Factorizing $[\rho] \, \, [u] 
\, \underline{\Upsilon}$ in each term, we first observe that the imaginary part of the sum equals zero. We 
can thus simplify $\alpha_0$ by retaining only the real part of each term. This leads to the expression
\begin{multline*}
\dfrac{\ueta_0 \, \alpha_0}{[\rho] \, \, [u] \, \underline{\Upsilon}} 
=-c_\ell^2 \, c_r^2 \, \ueta_0^2 \, (u_r^2 +u_\ell \, u_r) \, |\uceta|^2 
+u_\ell \, u_r \, \ua_\ell \, \dfrac{c_r^2 \, \ueta_0^2}{\ua_r} \, c_r^2 \, |\uceta|^2 \\
+u_\ell \, u_r \, \ua_r \, \dfrac{c_\ell^2 \, \ueta_0^2}{\ua_\ell} \, c_\ell^2 \, |\uceta|^2 \, 
\dfrac{\ueta_0^2 +u_r^2 \, |\uceta|^2}{\ueta_0^2 +u_\ell^2 \, |\uceta|^2} 
+(u_\ell \, u_r -u_\ell^2) \, |\uceta|^2 \, c_\ell^2 \, c_r^2 \, \ueta_0^2 \, 
\dfrac{\ueta_0^2 -u_\ell \, u_r \, |\uceta|^2}{\ueta_0^2 +u_\ell^2 \, |\uceta|^2} \, .
\end{multline*}
At this stage, some elementary manipulations lead to the expression \eqref{alpha0} of $\alpha_0$.

The link between $\alpha_0$ and the partial derivative $\partial_{\eta_0} \Delta (\ueta)$ comes from the relation
\begin{equation*}
\ueta_0 \, \dfrac{\partial}{\partial \eta_0} (u_\ell \, u_r \, a_\ell \, a_r +c_\ell^2 \, c_r^2 \, \eta_0^2) \Big|_{\ueta} 
=u_\ell^2 \, u_r^2 \, \left( \dfrac{\ua_\ell^2}{c_\ell^2} +\dfrac{\ua_r^2}{c_r^2} \right) 
+2\, c_\ell^2 \, c_r^2 \, \ueta_0^2 \, ,
\end{equation*}
which is obtained by differentiating \eqref{delta} and the expressions \eqref{valeurspropres1}, 
\eqref{valeurspropres2} with respect to $\eta_0$ and then evaluating at $\ueta$.
\end{proof}

Since the coefficient $\alpha_0$ is real, we obtain $a_0(k)=\alpha_0 /(i\, k)$ for all $k \neq 0$. In particular, 
the amplitude equation \eqref{BurgersEuler} reduces to
\begin{equation*}
\partial_\tau \widehat{w} (\tau,k) +\dfrac{i \, k}{\alpha_0} \, \int_\R a_1(k-k',k') \, \widehat{w} (\tau,k-k') \, 
\widehat{w} (\tau,k') \, \dif k' =0 \, .
\end{equation*}
As far as smooth solutions are concerned, the Cauchy problem associated with this kind of nonlocal Burgers 
equation is known to be locally well-posed under rather simple algebraic conditions (see \cite{Benzoni2009}). 
These conditions are invariant under multiplication by a nonzero real constant, so that it is sufficient to investigate 
whether they are satisfied by the slightly simpler kernel $4\, \pi \, a_1$. This is the purpose of the next section.

\section{Computation of the quadratic kernel}
\label{sect2}

We define the kernel $q(k,k') := 4\, \pi \, a_1(k,k')$. Following \cite{BenzoniRosini}, we can decompose $q$ as 
follows:
\begin{equation*}
q(k,k') =\sum_{j=1}^5 q_j(k,k') \, ,
\end{equation*}
where the kernels $q_1,\dots,q_5$ are given by\footnote{We keep the notation of \cite{BenzoniRosini} for the 
functions $\widehat{r}_\pm,\widehat{r}$ and so on.}
\begin{align}
q_1(k,k') &:= \sigma (k+k')^* \, \sum_{j=0}^{d-1} \ueta_j \, \Big\{ 
\dif\tilde{f}^j (v_r) \cdot (\widehat{r}_+ (k,0) +\widehat{r}_+ (k',0)) 
-\dif\tilde{f}^j (v_\ell) \cdot (\widehat{r}_- (k,0) +\widehat{r}_- (k',0)) \Big\} \, ,\label{defq1} \\
q_2(k,k') &:=  -\sigma (k+k')^* \, \Big\{ \dif^2\tilde{f}^d (v_r) \cdot (\widehat{r}_+ (k,0),\widehat{r}_+ (k',0)) 
-\dif^2\tilde{f}^d (v_\ell) \cdot (\widehat{r}_- (k,0),\widehat{r}_- (k',0)) \Big\} \, ,\label{defq2} \\
q_3(k,k') &:= i\, (k+k') \, \int_0^{+\infty} L(k+k',z) \, \dif \A (v,\ueta) \cdot \widehat{r} (k,z) \cdot 
\widehat{r} (k',z) \, \dif z \, ,\label{defq3} \\
q_4(k,k') &:= \int_0^{+\infty} L(k+k',z) \, \dfrac{\partial}{\partial z} \, \left( \dif \breve{\A}^d (v) \cdot \widehat{r} (k,z) 
\cdot \widehat{r} (k',z) \right) \, \dif z \, ,\label{defq4} \\
q_5(k,k') &:= -\int_0^{+\infty} L(k+k',z) \, \breve{\A} (v,\ueta) \left( \dfrac{\partial \widehat{r}}{\partial z} (k,z) 
+\dfrac{\partial \widehat{r}}{\partial z} (k',z) \right) \, \dif z \, .\label{defq5}
\end{align}
We first examine the kernel $q_1$ and derive its expression for all values of $(k,k')$.

\begin{lemma}
\label{lem1}
Let us define the quantity
\begin{equation}
\label{defQ}
Q:=2\, [\rho] \, [u] \, \underline{\Upsilon} \, (\ueta_0^2 +u_r^2 \, |\uceta|^2) \, (\ubeta_1^- +\ubeta_2^-) \, 
i\, u_\ell \, u_r \, \ua_\ell \, \ua_r \, \dfrac{u_\ell \, \ua_r +i\, c_r^2 \, \ueta_0}{u_\ell \, \ua_r -i\, c_r^2 \, \ueta_0} \, .
\end{equation}
Then the kernel $q_1$ in \eqref{defq1} satisfies
\begin{equation*}
q_1(k,k') =\begin{cases}
0 &\text{\rm if $k>0$ and $k'>0$,}\\
\overline{Q} &\text{\rm if $k>0$, $k'<0$ and $k+k'>0$.}
\end{cases}
\end{equation*}
\end{lemma}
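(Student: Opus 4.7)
The plan is to expand the boundary traces $\widehat{r}_\pm(\cdot,0)$ appearing in \eqref{defq1} in the basis of the right eigenmodes of each one-sided state, and then to systematically exploit the defining relation $\sigma^* \, H(\underline{v}) \, \underline{R}_p^- = 0$ to annihilate contributions from the stable modes. More precisely, I would recall from \cite[Section 2]{BenzoniRosini} that, for $k > 0$, the trace $\widehat{r}_\pm(k,0)$ is a combination of the vectors $\underline{R}_p^-$ with only two nonzero scalar coefficients, proportional to $\gamma_1$ and $\gamma_2$ from \eqref{defgamma}, whereas for $k < 0$ complex conjugation replaces each $\underline{R}_p^-$ by $\underline{R}_p^+$ and each $\gamma_p$ by $\overline{\gamma_p}$.

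The key algebraic ingredient is the eigenmode relation
\begin{equation*}
\Bigl(\sum_{j=0}^{d-1} \ueta_j \, \dif \tilde{f}^j(v_s)\Bigr)\, \underline{R}_p^\mp = -\,i\, \ubeta_p^\mp \, \breve{\A}^d(v_s) \, \underline{R}_p^\mp \, , \qquad s \in \{\ell,r\} \, ,
\end{equation*}
which converts the tangential-flux action inside \eqref{defq1} into a normal-flux term. Once the block structure of $H(\underline{v})$ and of the $R$-vectors is accounted for, the whole sum reassembles into a finite combination of products of type $\sigma^* \, H(\underline{v}) \, \underline{R}_p^{\mp}$ weighted by the coefficients $\gamma_p$ and the eigenvalues $\ubeta_p^\mp$. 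For $k > 0$ and $k' > 0$, only the vectors $\underline{R}_p^-$ appear, so the orthogonality built into the definition \eqref{defsigma} of $\sigma$ makes every contribution vanish, giving $q_1(k,k') = 0$.

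For $k > 0$ and $k' < 0$ with $k + k' > 0$, the trace at the negative frequency $k'$ brings in the vectors $\underline{R}_p^+$ with conjugate coefficients $\overline{\gamma_1}, \overline{\gamma_2}$, for which the orthogonality fails. The nonzero inner products $i\, \sigma^* \, H(\underline{v}) \, \underline{R}_1^+$ and $i\, \sigma^* \, H(\underline{v}) \, \underline{R}_2^+$ have, however, already been computed in \eqref{prod1} and \eqref{prod2} during the proof of Proposition~\ref{prop1}. Plugging these in, together with the explicit expressions \eqref{defgamma} for $\gamma_1,\gamma_2$, using $\ubeta_p^+ = -\overline{\ubeta_p^-}$ to repackage the eigenvalue factors, and simplifying via the Lopatinskii relation $u_\ell \, u_r \, \ua_\ell \, \ua_r + c_\ell^2 \, c_r^2 \, \ueta_0^2 = 0$, one should arrive at exactly the value $\overline{Q}$ given by \eqref{defQ}.

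I expect the main obstacle to be the bookkeeping in the second case: one has to track which eigenmodes are selected by conjugation, pair each with its correct scalar prefactor, and verify that the sum collapses to precisely $\overline{Q}$ \emph{including} the factor $(\ubeta_1^- + \ubeta_2^-)$, which presumably appears after using $\ubeta_p^+ = -\overline{\ubeta_p^-}$ to rewrite the eigenvalue contributions coming from the conjugated terms. Since all the nontrivial inner products are already available from Proposition~\ref{prop1}, the remaining work is algebraic rather than conceptual, but it is precisely the step where a sign or conjugation slip would be easiest to make.
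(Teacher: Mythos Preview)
Your plan is exactly the paper's: rewrite the tangential-flux action on each eigenmode as $-i\,\ubeta_p^\mp$ times a normal-flux vector, pair with $\sigma^*$, and then invoke either the orthogonality $\sigma^* H(\underline{v})\,\underline{R}_p^-=0$ (for $k,k'>0$) or the already-computed products \eqref{prod1}, \eqref{prod2} together with \eqref{defgamma} and the Lopatinskii relation (for the mixed-sign case). One point to tighten: in your displayed eigenmode identity the right-hand side should carry $\dif\tilde{f}^d$ (equivalently $\pm H(\underline{v})$ acting on $\underline{R}_p^\mp$), not $\breve{\A}^d$, and making this work for the full $(d{+}2)$-component augmented flux---so that the result can be paired with $\sigma\in\C^{d+2}$---is exactly where the paper invokes the entropy--flux relation $\dif g^0(v)\,A^j(v)=\dif g^j(v)$.
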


\begin{proof}
The function $g^0$ is known to be an entropy for the isothermal Euler equations with corresponding flux 
$(g^1,\dots,g^d)$. We thus have the relations
\begin{equation*}
\forall \, j=0,\dots,d \, ,\quad \dif g^0(v) \, A^j(v) =\dif g^j(v) \, ,
\end{equation*}
where we use the convention $A^0(v)=I$ for all $v$. Using this relation between the Jacobian matrices, we get
\begin{equation*}
\sum_{j=0}^{d-1} \ueta_j \, \dif\tilde{f}^j (v_r) \cdot \underline{r}_2^- =\begin{pmatrix}
\sum_{j=0}^{d-1} \ueta_j \, A^j (v_r) \, \underline{r}_2^- \\
\dif g^0(v_r) \, \sum_{j=0}^{d-1} \ueta_j \, A^j (v_r) \, \underline{r}_2^- \end{pmatrix} 
=i\, \ubeta_2^- \, \begin{pmatrix}
A^d (v_r) \, \underline{r}_2^- \\
\dif g^d(v_r) \cdot \underline{r}_2^- \end{pmatrix} =-i \, \ubeta_2^- \, H(\underline{v}) \, \underline{R}_2^- \, .
\end{equation*}
Similarly we have
\begin{equation*}
\sum_{j=0}^{d-1} \ueta_j \, \dif\tilde{f}^j (v_\ell) \cdot \underline{r}_1^- 
=-i \, \ubeta_1^- \, H(\underline{v}) \, \underline{R}_1^- \, .
\end{equation*}
For $k>0$ and $k'>0$, we thus get
\begin{align*}
q_1(k,k') &=2\, \sigma^* \, \sum_{j=0}^{d-1} \gamma_2 \, \ueta_j \, \dif\tilde{f}^j (v_r) \cdot \underline{r}_2^- 
-2\, \sigma^* \, \sum_{j=0}^{d-1} \gamma_1 \, \ueta_j \, \dif\tilde{f}^j (v_\ell) \cdot \underline{r}_1^- \\
&=-2\, i \, \gamma_2 \, \ubeta_2^- \, \sigma^* \, H(\underline{v}) \, \underline{R}_2^- 
+2\, i \, \gamma_1 \, \ubeta_1^- \, \sigma^* \, H(\underline{v}) \, \underline{R}_1^- =0 \, ,
\end{align*}
because $\sigma$ is orthogonal to both $H(\underline{v}) \, \underline{R}_1^-$ and 
$H(\underline{v}) \, \underline{R}_2^-$.
\bigskip

Let us now consider the case $k>0$, $k'<0$ and $k+k'>0$. Using the same relations as above for the differentials 
${\rm d}g^j$, we obtain
\begin{align*}
q_1(k,k') &=\sigma^* \, \sum_{j=0}^{d-1} \ueta_j \, \dif\tilde{f}^j (v_r) \cdot 
(\gamma_2 \, \underline{r}_2^- +\overline{\gamma_2} \, \underline{r}_2^+) 
-\sigma^* \, \sum_{j=0}^{d-1} \ueta_j \, \dif\tilde{f}^j (v_\ell) \cdot 
(\gamma_1 \, \underline{r}_1^- +\overline{\gamma_1} \, \underline{r}_1^+) \\
&=-i \, \overline{\gamma_2} \, \ubeta_2^+ \, \sigma^* \, H(\underline{v}) \, \underline{R}_2^+ 
+i \, \overline{\gamma_1} \, \ubeta_1^+ \, \sigma^* \, H(\underline{v}) \, \underline{R}_1^+ \, .
\end{align*}
The Hermitian products $\sigma^* \, H(\underline{v}) \, \underline{R}_2^+$ and $\sigma^* \, H(\underline{v}) \, 
\underline{R}_1^+$ have already been computed in the proof of Proposition \ref{prop1}, see \eqref{prod1} and 
\eqref{prod2}. We then obtain
\begin{align*}
q_1(k,k') =& -2\, [u] \, \underline{\Upsilon} \, (\ueta_0^2 +u_r^2 \, |\uceta|^2) \, c_r^2 \, \ua_r \, 
(u_r \, \ua_\ell -i\, c_\ell^2 \, \ueta_0) \, \overline{\gamma_2} \, \ubeta_2^+ \\
&-2\, [u] \, \underline{\Upsilon} \, (\ueta_0^2 +u_r^2 \, |\uceta|^2) \, c_\ell^2 \, \ua_\ell \, 
(u_\ell \, \ua_r -i\, c_r^2 \, \ueta_0) \, \overline{\gamma_1} \, \ubeta_1^+ \, .
\end{align*}
We use the definition \eqref{defgamma} to obtain
\begin{equation*}
c_r^2 \, \ua_r \, (u_r \, \ua_\ell -i\, c_\ell^2 \, \ueta_0) \, \overline{\gamma_2} 
=c_\ell^2 \, \ua_\ell \, (u_\ell \, \ua_r -i\, c_r^2 \, \ueta_0) \, \overline{\gamma_1} =i\, [\rho] \, 
c_\ell^2 \, c_r^2 \, \ueta_0^2 \, \dfrac{u_\ell \, \ua_r -i\, c_r^2 \, \ueta_0}{u_\ell \, \ua_r +i\, c_r^2 \, \ueta_0} \, ,
\end{equation*}
and the claim follows using the relation $c_\ell^2 \, c_r^2 \, \ueta_0^2 =-u_\ell \, u_r \, \ua_\ell \, \ua_r$.
\end{proof}

Deriving the expression of the kernels $q_3,q_4,q_5$ requires the expression of the row vector 
$L(k+k',z)$, which we derive right now.

\begin{lemma}
\label{lem2}
For $k>0$, there holds
\begin{equation*}
L(k,z) =\begin{pmatrix}
\dfrac{\omega_1}{\gamma_1} \, \exp (-k\, \ubeta_1^+ \, z) \, \widetilde{\ell}_1 
+\dfrac{\omega_3}{\gamma_1} \, \exp (-k\, \ubeta_3^+ \, z) \, \widetilde{\ell}_3 
& \dfrac{\omega_2}{\gamma_2} \, \exp (-k\, \ubeta_2^+ \, z) \, \widetilde{\ell}_2 \end{pmatrix} \, ,
\end{equation*}
where we have set
\begin{align*}
&\widetilde{\ell}_1 :=\begin{pmatrix}
i\, \eta_0 -2\, u_\ell \, \ubeta_1^+ & -i \, \ceta^T & \ubeta_1^+ \end{pmatrix} \, , 
&\widetilde{\ell}_3 :=\begin{pmatrix}
-u_\ell^2 \, |\uceta|^2 & \eta_0 \, \ceta^T & u_\ell \, |\uceta|^2 \end{pmatrix} \, ,\\
&\widetilde{\ell}_2 :=\begin{pmatrix}
-i\, \eta_0 -2\, u_r \, \ubeta_2^+ & i \, \ceta^T & \ubeta_2^+ \end{pmatrix} \, , &
\end{align*}
and
\begin{align*}
&\omega_1 := [\rho] \, [u] \, \underline{\Upsilon} \, 
\dfrac{\ueta_0^2 +u_r^2 \, |\uceta|^2}{\ueta_0^2 +u_\ell^2 \, |\uceta|^2} \, i \, u_\ell \, \ueta_0 \, 
(u_r \, \ua_r -i\, c_r^2 \, \ueta_0) ,& \\
&\omega_3 := [\rho] \, [u]^2 \, \underline{\Upsilon} \, 
\dfrac{\ueta_0^2 -u_\ell \, u_r \, |\uceta|^2}{\ueta_0^2 +u_\ell^2 \, |\uceta|^2} \, (u_r \, \ua_r -i\, c_r^2 \, \ueta_0) ,
&\omega_2 := [\rho] \, [u] \, \underline{\Upsilon} \, i \, u_r \, \ueta_0 \, (u_\ell \, \ua_\ell -i\, c_\ell^2 \, \ueta_0) .
\end{align*}
\end{lemma}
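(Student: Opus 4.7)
The plan is to exploit the structural form that $L(k,z)$ must have in the Benzoni--Rosini framework, reduce the problem to a small linear system at $z=0$, and then simplify using the explicit expressions for $\sigma^*$ and $\gamma_q$ obtained in Section 1.

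First I would write the general representation: for $k>0$, the row vector $L(k,z)$ must decay as $z\to+\infty$ and solve the adjoint boundary-layer ODE, so it admits an expansion over the stable left modes,
\[
L(k,z)=\sum_{p}\mu_p(k)\,\exp(-k\,\ubeta_p^+\,z)\,\widetilde{\ell}_p,
\]
where $\widetilde{\ell}_p$ is proportional to the left eigenvector attached to $\ubeta_p^+$. Among the degenerate modes $\ubeta_4^+=\cdots=\ubeta_{d+1}^+=\ubeta_3^+$, only the non-shear direction survives: the $\ell_j^+$ for $j\geq 4$ live in the $\uceta^\bot$ sector of the middle block, whereas the boundary data at $z=0$ (involving $\sigma^*$) is aligned with $\uceta^T$ in that same block, so the shear contributions project to zero. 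This reduces the ansatz to the three unknowns $\mu_1,\mu_2,\mu_3$ displayed in the statement.

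Second, I would fix these three coefficients from the boundary condition. In the Benzoni--Rosini construction, $L(k,0)$ is characterised by the requirement that $\int_0^{+\infty} L(k,z)\cdot(\text{source})\,\dif z$ produce the correct scalar projection for the amplitude equation, and this is enforced by three scalar identities involving $\sigma^*\,H(\underline v)\,\underline R_q^{\pm}$ and $\gamma_q$. Together with the biorthogonality $(\underline L_p^+)^*\,\breve{\A}^d(\underline v)\,\underline R_q^{\pm}=\delta_{pq}$ already built into the normalisation, these relations give a nearly triangular $3\times 3$ linear system: the right-state coefficient $\mu_2$ decouples cleanly, while $\mu_1,\mu_3$ are determined by two coupled equations for the left state. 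The divisions by $\gamma_1$ and $\gamma_2$ in the statement reflect directly the normalisation by the principal-mode coefficients.

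Third, the explicit form of $\omega_1,\omega_2,\omega_3$ would be obtained by inserting the formulas \eqref{defsigma}--\eqref{defD} for $\sigma^*$ and \eqref{defgamma} for $\gamma_1,\gamma_2$, using the Hermitian products $\sigma^*\,H(\underline v)\,\underline R_p^+$ already computed in \eqref{prod1}, \eqref{prod2}, \eqref{prod3}, and invoking repeatedly the Lopatinskii-root identity $u_\ell u_r\,\ua_\ell\,\ua_r+c_\ell^2\,c_r^2\,\ueta_0^2=0$. The main obstacle will be the final algebraic simplification isolating the rational factor $(\ueta_0^2+u_r^2|\uceta|^2)/(\ueta_0^2+u_\ell^2|\uceta|^2)$ in $\omega_1$ and $(\ueta_0^2-u_\ell u_r|\uceta|^2)/(\ueta_0^2+u_\ell^2|\uceta|^2)$ in $\omega_3$; this mirrors the four-term collapse \eqref{prop11}--\eqref{prop15} used in the proof of Proposition \ref{prop1}, and should present no conceptual difficulty beyond careful bookkeeping. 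The coefficient $\omega_2$ is expected to fall out immediately since only one mode $\ubeta_2^+$ contributes on the right-state side.
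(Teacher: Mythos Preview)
Your proposal overcomplicates the argument and rests on a mischaracterisation of how $L(k,z)$ arises. In the Benzoni--Rosini framework that the paper adopts, $L(k,z)$ is not merely a solution of an adjoint ODE whose coefficients have to be recovered from some implicit boundary requirement: it is \emph{defined} by the explicit formula
\[
L(k,z)=\sum_{p=1}^{d+1}\sigma^*\,H(\underline v)\,\underline R_p^+\,\exp(-k\,\ubeta_p^+\,z)\,(\underline L_p^+)^*,
\]
so the ``unknowns'' $\mu_p$ you propose to solve for are already given as $\mu_p=\sigma^*\,H(\underline v)\,\underline R_p^+$. There is no $3\times 3$ linear system to set up, no boundary condition to impose, and no need for biorthogonality relations at this stage. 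Your second step is therefore superfluous, and the vague description of the ``requirement that $\int_0^{+\infty}L(k,z)\cdot(\text{source})\,\dif z$ produce the correct scalar projection'' is not a workable characterisation.

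The paper's proof is accordingly two lines. First, for $p\ge 4$ one checks $\sigma^*\,H(\underline v)\,\underline R_p^+=0$: indeed $H(\underline v)\,\underline R_p^+$ has its only nonzero block equal to $u_\ell\,\ueta_0\,\check e_{p-2}\in\uceta^\bot$, while the corresponding block of $\sigma^*$ is $\check D\,\uceta^T$. (This is the orthogonality you allude to, but it acts on $H\underline R_p^+$, not on $\ell_p^+$.) Second, for $p=1,2,3$ one inserts the values of $\sigma^*\,H(\underline v)\,\underline R_p^+$ already obtained in \eqref{prod1}, \eqref{prod2}, \eqref{prod3} and reads off $\widetilde\ell_p$ from the explicit $(\underline L_p^+)^*$; the factors $\omega_p/\gamma_p$ then fall out by direct substitution, with no further algebraic collapse required. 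The rational factors in $\omega_1,\omega_3$ that you anticipate having to ``isolate'' are already present in the prefactors of $\ell_1^+,\ell_3^+$ combined with \eqref{prod2}, \eqref{prod3}.
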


\begin{proof}
We first observe that $\sigma$ is orthogonal to the vectors $H(\underline{v})\, \underline{R}_p^+$ for $p \ge 4$, 
so for $k>0$ the expression of $L(k,z)$ reduces to
\begin{equation*}
L(k,z) =\sum_{p=1}^3 \sigma^* \, H(\underline{v}) \, \underline{R}_p^+ \, \exp (-k\, \ubeta_p^+ \, z) \, 
(\underline{L}_p^+)^* \, .
\end{equation*}
The expression of the products $\sigma^* \, H(\underline{v}) \, \underline{R}_p^+$, $p=1,2,3$ can be found in 
\eqref{prod1}, \eqref{prod2}, \eqref{prod3}, and we use the definitions of the left eigenvectors $\underline{L}_p^+$ 
to derive the expressions given in Lemma \ref{lem2}.
\end{proof}

\noindent We now examine the kernel $q_5$, which, as $q_1$ but unlike $q_2,q_3,q_4$, does not contain any term 
in $p''(\rho_{\ell,r})$.

\begin{lemma}
\label{lem3}
With $Q$ defined in \eqref{defQ}, the kernel $q_5$ in \eqref{defq5} satisfies
\begin{equation*}
q_5(k,k') =\begin{cases}
0 &\text{\rm if $k>0$ and $k'>0$,}\\
\overline{Q} \, \dfrac{k'}{k} &\text{\rm if $k>0$, $k'<0$ and $k+k'>0$.}
\end{cases}
\end{equation*}
\end{lemma}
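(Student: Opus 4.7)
The strategy mirrors that of Lemma~\ref{lem1}, with traces at the boundary replaced by the full profiles $\widehat{r}(k,z)$ in the bulk integral. For $k > 0$, the decay condition at $z = +\infty$ forces
\begin{equation*}
\widehat{r}(k,z) = \gamma_1 \exp(k\, \ubeta_1^- z) \underline{R}_1^- + \gamma_2 \exp(k\, \ubeta_2^- z) \underline{R}_2^-,
\end{equation*}
since $\gamma_p = 0$ for $p \geq 3$; by reality of the underlying field, for $k' < 0$ one has $\widehat{r}(k', z) = \overline{\gamma_1} \exp(k' \ubeta_1^+ z) \underline{R}_1^+ + \overline{\gamma_2} \exp(k' \ubeta_2^+ z) \underline{R}_2^+$. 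The eigenvalue relation $\breve{\A}(v, \ueta) \underline{R}_p^\pm = -i \ubeta_p^\pm \breve{\A}^d(v) \underline{R}_p^\pm$, which is the bulk counterpart of the identity on $d\tilde{f}^j$ used in the proof of Lemma~\ref{lem1}, then converts $\breve{\A}(v, \ueta) \partial_z \widehat{r}$ into a combination of vectors $\breve{\A}^d(v) \underline{R}_p^\pm$ weighted by simple exponentials in $z$.

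The first case, $k, k' > 0$, is immediate: both $\partial_z\widehat{r}(k,z)$ and $\partial_z\widehat{r}(k',z)$ expand exclusively in $\underline{R}_p^-$ modes, and the cross-biorthogonality $(\underline{L}_q^+)^* \breve{\A}^d(\underline{v}) \underline{R}_p^- = 0$ against the rows of the $L(k+k',z)$ from Lemma~\ref{lem2} annihilates the integrand, giving $q_5 = 0$. For the case $k > 0$, $k' < 0$, $k+k' > 0$, the $\widehat{r}(k,z)$ piece vanishes by the same mechanism; only the $\widehat{r}(k',z)$ piece survives, and the diagonal biorthogonality $(\underline{L}_q^+)^* \breve{\A}^d \underline{R}_p^+ = \delta_{pq}$ selects the matching $p = q$ terms for $p = 1, 2$. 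The two exponentials combine into $\exp(-k \ubeta_p^+ z)$, whose integral over $[0,+\infty)$ equals $1/(k \ubeta_p^+)$; one factor of $\ubeta_p^+$ cancels, leaving the $k'/k$ prefactor.

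What remains is an explicit linear combination of $\overline{\gamma_p}\, \ubeta_p^+\, \sigma^* H(\underline{v}) \underline{R}_p^+$ for $p = 1, 2$. Substituting the expressions \eqref{prod1}--\eqref{prod2} for $\sigma^* H(\underline{v}) \underline{R}_p^+$ and invoking the identity $c_\ell^2 \ua_\ell (u_\ell \ua_r - i c_r^2 \ueta_0)\overline{\gamma_1} = c_r^2 \ua_r (u_r \ua_\ell - i c_\ell^2 \ueta_0)\overline{\gamma_2}$ that was proved in Lemma~\ref{lem1} makes the two contributions proportional to a common factor, so that the $\ubeta_p^+$ pieces collapse into $\ubeta_1^+ + \ubeta_2^+$. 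A final application of the Lopatinskii relation $c_\ell^2 c_r^2 \ueta_0^2 = -u_\ell u_r \ua_\ell \ua_r$ rewrites the prefactor in the form dictated by \eqref{defQ}, yielding $q_5(k,k') = \overline{Q}\cdot k'/k$.

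The main obstacle is careful sign bookkeeping: the inner products $\sigma^* H \underline{R}_1^+$ and $\sigma^* H \underline{R}_2^+$ carry opposite signs in \eqref{prod1}--\eqref{prod2}, and these must interact correctly with the signs produced by the eigenvalue relation, by the conjugation in the $k' < 0$ ansatz, and by the integration, so that the Lemma~\ref{lem1} identity yields the symmetric combination $\ubeta_1^+ + \ubeta_2^+$ characteristic of $\overline{Q}$ rather than an antisymmetric one.
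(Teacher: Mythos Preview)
Your proposal is correct and follows essentially the same route as the paper's proof. The only difference is presentational: you invoke the biorthogonality $(\underline{L}_q^+)^*\,\breve{\A}^d(\underline{v})\,\underline{R}_p^\pm=\delta_{pq}$ (resp.\ $=0$) directly, whereas the paper re-expresses $L(k+k',z)$ via the concrete row vectors $\widetilde{\ell}_1,\widetilde{\ell}_2,\widetilde{\ell}_3$ of Lemma~\ref{lem2} and verifies the individual products $\widetilde{\ell}_p\,A^d(v_{\ell,r})\,\underline{r}_q^\pm$ by hand; these are the same orthogonality relations written at two different levels of abstraction. Likewise, your final simplification via the identity $c_\ell^2\,\ua_\ell\,(u_\ell\,\ua_r-i\,c_r^2\,\ueta_0)\,\overline{\gamma_1}=c_r^2\,\ua_r\,(u_r\,\ua_\ell-i\,c_\ell^2\,\ueta_0)\,\overline{\gamma_2}$ from Lemma~\ref{lem1} is exactly what the paper does, though the paper splits it into the pair of relations $\overline{\gamma_1}/\gamma_1=-\overline{\gamma_2}/\gamma_2$ and $\omega_1\,\widetilde{\ell}_1\,A^d(v_\ell)\,\underline{r}_1^+=-\omega_2\,\widetilde{\ell}_2\,A^d(v_r)\,\underline{r}_2^+$, whose product is your identity. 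One minor caution: your stated eigenvalue relation carries a sign $-i\ubeta_p^\pm$, but comparison with the computation in the proof of Lemma~\ref{lem1} (where $\sum_j\ueta_j A^j r_2^-=i\ubeta_2^- A^d r_2^-$) shows the sign convention in this paper is $+i\ubeta_p^\pm$; since you flag sign bookkeeping as the delicate point and the final answer is correct, this is evidently just a slip in the write-up rather than in the computation.
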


\begin{proof}
For $k>0$ and $k'>0$, we compute
\begin{equation*}
\breve{\A} (v,\ueta) \left( 
\dfrac{\partial \widehat{r}}{\partial z} (k,z) +\dfrac{\partial \widehat{r}}{\partial z} (k',z) \right) 
=\begin{pmatrix}
i\, \gamma_1 \, (\ubeta_1^-)^2 \, (k\, \exp (k\, \ubeta_1^- \, z) +k'\, \exp (k'\, \ubeta_1^- \, z)) \, 
A^d(v_\ell) \, \underline{r}_1^- \\
i\, \gamma_2 \, (\ubeta_2^-)^2 \, (k\, \exp (k\, \ubeta_2^- \, z) +k'\, \exp (k'\, \ubeta_2^- \, z)) \, 
A^d(v_r) \, \underline{r}_2^- \end{pmatrix} \, .
\end{equation*}
Using the orthogonality properties $\widetilde{\ell}_1 \, A^d(v_\ell) \, \underline{r}_1^- 
=\widetilde{\ell}_3 \, A^d(v_\ell) \, \underline{r}_1^- =\widetilde{\ell}_3 \, A^d(v_r) \, \underline{r}_2^- =0$, 
we get $q_5(k,k')=0$ if $k>0$ and $k'>0$ because the integrand in \eqref{defq5} vanishes.

Let us now consider the case $k>0$, $k'<0$ and $k+k'>0$. From the previous argument, we still find that the term 
$L(k+k',z) \, \breve{\A}(v,\ueta) \, \partial_z \widehat{r}(k,z)$ vanishes. We thus get
\begin{align*}
q_5(k,k') &= -\int_0^{+\infty} L(k+k',z) \, \breve{\A} (v,\ueta) \, \dfrac{\partial \widehat{r}}{\partial z} (k',z) \, \dif z \\
&=-i\, k' \, \int_0^{+\infty} L(k+k',z) \, \begin{pmatrix}
\overline{\gamma_1} \, (\ubeta_1^+)^2 \, \exp (k'\, \ubeta_1^+ \, z) \, A^d(v_\ell) \, \underline{r}_1^+ \\
\overline{\gamma_2} \, (\ubeta_2^+)^2 \, \exp (k'\, \ubeta_2^+ \, z) \, A^d(v_r) \, \underline{r}_2^+ \end{pmatrix} \, \dif z \, .
\end{align*}
We now use the expression of $L(k+k',z)$ in Lemma \ref{lem2}. The expression of $q_5(k,k')$ is simplified by 
recalling the orthogonality property $\widetilde{\ell}_3 \, A^d(v_\ell) \, \underline{r}_1^+ =0$ and we get
\begin{align*}
q_5(k,k') =& -i\, k' \, \int_0^{+\infty} \dfrac{\overline{\gamma_1}}{\gamma_1} \, \omega_1 \, 
\widetilde{\ell}_1 \, A^d(v_\ell) \, \underline{r}_1^+ \, (\ubeta_1^+)^2 \, \exp (-k\, \ubeta_1^+ \, z) \, \dif z \\
&-i\, k' \, \int_0^{+\infty} \dfrac{\overline{\gamma_2}}{\gamma_2} \, \omega_2 \, 
\widetilde{\ell}_2 \, A^d(v_r) \, \underline{r}_2^+ \, (\ubeta_2^+)^2 \, \exp (-k\, \ubeta_2^+ \, z) \, \dif z \, ,\\
=& -i\, \dfrac{k'}{k} \, \left\{ \dfrac{\overline{\gamma_1}}{\gamma_1} \, \omega_1 \, 
\widetilde{\ell}_1 \, A^d(v_\ell) \, \underline{r}_1^+ \, \ubeta_1^+ +\dfrac{\overline{\gamma_2}}{\gamma_2} \, \omega_2 \, 
\widetilde{\ell}_2 \, A^d(v_r) \, \underline{r}_2^+ \, \ubeta_2^+ \right\} \, .
\end{align*}
The conclusion of Lemma \ref{lem3} then follows from the relations
\begin{align*}
&\dfrac{\overline{\gamma_1}}{\gamma_1} =-\dfrac{\overline{\gamma_2}}{\gamma_2}
=-\dfrac{u_\ell \, \ua_r -i\, c_r^2 \, \ueta_0}{u_\ell \, \ua_r +i\, c_r^2 \, \ueta_0} \, ,\\
&\omega_1 \, \widetilde{\ell}_1 \, A^d(v_\ell) \, \underline{r}_1^+ 
=-\omega_2 \, \widetilde{\ell}_2 \, A^d(v_r) \, \underline{r}_2^+ 
=2\, [\rho] \, [u] \, \underline{\Upsilon} \, (\ueta_0^2 +u_r^2 \, |\uceta|^2) \, u_\ell \, u_r \, \ua_\ell \, \ua_r \, .
\end{align*}
\end{proof}

\noindent We immediately get

\begin{corollary}
\label{cor1}
With $Q$ defined in \eqref{defQ}, the kernels $q_1,q_5$ in \eqref{defq1}, \eqref{defq5} satisfy
\begin{equation*}
(q_1+q_5)(k,k') =\begin{cases}
0 &\text{\rm if $k>0$ and $k'>0$,}\\
\overline{Q} \, \left( 1+\dfrac{k'}{k} \right) &\text{\rm if $k>0$, $k'<0$ and $k+k'>0$.}
\end{cases}
\end{equation*}
\end{corollary}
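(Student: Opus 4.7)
The statement is an immediate corollary of Lemma \ref{lem1} and Lemma \ref{lem3}, as the author's phrasing (``\emph{We immediately get}'') already signals. The plan is simply to add the two piecewise expressions in the two regimes of interest.

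More precisely, I would proceed as follows. In the regime $k>0$, $k'>0$, Lemma \ref{lem1} gives $q_1(k,k')=0$ and Lemma \ref{lem3} gives $q_5(k,k')=0$, so the sum $(q_1+q_5)(k,k')=0$ is trivial. In the regime $k>0$, $k'<0$, $k+k'>0$, Lemma \ref{lem1} gives $q_1(k,k')=\overline{Q}$ and Lemma \ref{lem3} gives $q_5(k,k')=\overline{Q}\,k'/k$. Summing and factoring $\overline{Q}$ yields
\[
(q_1+q_5)(k,k')=\overline{Q}\left(1+\dfrac{k'}{k}\right),
\]
which is the desired formula.

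There is no real obstacle here: the two lemmas were deliberately stated with the \emph{same} normalising constant $Q$ (introduced in \eqref{defQ}) precisely so that this addition is clean. The conceptual point that justifies writing the corollary separately is that the factor $1+k'/k = (k+k')/k$ is manifestly a symbol of a local (first-order) operator in the variable dual to $k$, in contrast with the non-local contributions that $q_2, q_3, q_4$ will produce in the sections that follow. So the only step worth recording is the factorisation $\overline{Q}+\overline{Q}\,k'/k = \overline{Q}(1+k'/k)$; no nontrivial computation is required.
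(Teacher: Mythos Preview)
Your proposal is correct and matches the paper's approach: the paper simply states ``We immediately get'' and gives no proof, since the corollary is just the sum of the formulas in Lemmas~\ref{lem1} and~\ref{lem3}. Your additional remark about the factor $1+k'/k$ is commentary rather than proof, but does no harm.
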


Our goal now is to derive an explicit expression for the kernels $q_2,q_3,q_4$ in \eqref{defq2}, \eqref{defq3}, 
\eqref{defq4}. This is more intricate because these kernels are quadratic with respect to the vector $\widehat{r}(k,z)$ 
and there is more algebra involved to obtain a factorized expression in each region of the $(k,k')$-plane. We begin 
with some preliminary computations that will be useful in Propositions \ref{prop2} and \ref{prop3} below.

\begin{lemma}
\label{lem4}
The coordinates \eqref{defD} of the vector $\sigma$ satisfy
\begin{align*}
\gamma_1 \, \check{D} &= -[\rho] \, [u] \, u_r \, \ueta_0 \, (u_r \, \ua_r -i \, c_r^2 \, \ueta_0) \, ,\\
\gamma_2 \, \check{D} &= [\rho] \, [u] \, u_r \, \ueta_0 \, (u_\ell \, \ua_\ell -i \, c_\ell^2 \, \ueta_0) \, ,\\
\gamma_1 \, D_{d+1} &= -[\rho] \, u_r \, (\ueta_0^2 +u_r^2 \, |\uceta|^2) \, (u_\ell \, \ua_r +i \, c_r^2 \, \ueta_0) \, ,\\
\gamma_2 \, D_{d+1} &= -[\rho] \, u_\ell \, (\ueta_0^2 +u_r^2 \, |\uceta|^2) \, (u_r \, \ua_\ell +i \, c_\ell^2 \, \ueta_0) \, ,\\
\gamma_1 \, D_{d+2} &= [\rho] \, (\ueta_0^2 +u_r^2 \, |\uceta|^2) \, (u_r \, \ua_r +i \, c_r^2 \, \ueta_0) 
-[\rho] \, [u] \, u_r \, |\uceta|^2 \, (u_r \, \ua_r -i \, c_r^2 \, \ueta_0) \, ,\\
\gamma_2 \, D_{d+2} &=  [\rho] \, (\ueta_0^2 +u_r^2 \, |\uceta|^2) \, (u_\ell \, \ua_\ell +i \, c_\ell^2 \, \ueta_0) 
+[\rho] \, [u] \, u_r \, |\uceta|^2 \, (u_\ell \, \ua_\ell -i \, c_\ell^2 \, \ueta_0) \, .
\end{align*}
\end{lemma}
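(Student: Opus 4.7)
The plan is to verify each of the six identities by direct algebraic manipulation, exploiting the crucial dispersion relation
\begin{equation*}
u_\ell \, u_r \, \ua_\ell \, \ua_r +c_\ell^2 \, c_r^2 \, \ueta_0^2 =0
\end{equation*}
satisfied by the root $\ueta$ of $\Delta$, and the fact that formulas \eqref{defgamma} give two equivalent expressions for each of $\gamma_1,\gamma_2$. The strategy in every case is to pick, for the left-hand side, the form of $\gamma_j$ whose denominator matches (or produces, after multiplication) a factor that appears naturally in $\check{D}$, $D_{d+1}$ or $D_{d+2}$.

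For the identities involving $\check{D}$, I would expand $\check{D}$ from \eqref{defD} and use the first form $\gamma_1=[\rho]\,u_r\,\ueta_0/(u_r\,\ua_\ell-i\,c_\ell^2\,\ueta_0)$. The resulting quotient reduces to showing that
\begin{equation*}
u_r \, \big(\ua_\ell(u_r\ua_r-i c_r^2 \ueta_0)+\ua_r(u_\ell\ua_\ell-i c_\ell^2\ueta_0)\big)=(u_r\ua_\ell-i c_\ell^2\ueta_0)(u_r\ua_r-i c_r^2\ueta_0),
\end{equation*}
an identity which, when expanded, is precisely the dispersion relation. The identity for $\gamma_2\check{D}$ follows either by the analogous computation with $\gamma_2=-[\rho]\,u_\ell\,\ueta_0/(u_\ell\,\ua_r-ic_r^2\,\ueta_0)$, or more elegantly by exploiting the $\ell\leftrightarrow r$ symmetry visible in the expression of $\check{D}$.

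For the identities involving $D_{d+1}$, I would use the second form of $\gamma_j$, namely $\gamma_1=-i[\rho]c_r^2\ueta_0^2/(\ua_\ell(u_\ell\ua_r-ic_r^2\ueta_0))$, so that after multiplying by $D_{d+1}=-i(\ueta_0^2+u_r^2|\uceta|^2)(u_r c_r^2\ua_\ell-u_\ell c_\ell^2\ua_r)$, the remaining work is to verify
\begin{equation*}
c_r^2\,\ueta_0^2\,(u_r c_r^2 \ua_\ell-u_\ell c_\ell^2 \ua_r)=u_r\,\ua_\ell\,(u_\ell^2\ua_r^2+c_r^4\ueta_0^2),
\end{equation*}
obtained by substituting $c_\ell^2 c_r^2\ueta_0^2=-u_\ell u_r\ua_\ell\ua_r$ into the term carrying $c_\ell^2$. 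The companion identity for $\gamma_2 D_{d+1}$ is derived in the same way.

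The identities involving $D_{d+2}$ are the computational bottleneck of the lemma, and constitute the main obstacle: $D_{d+2}$ already contains three groups of terms (including the mixed term with factor $(u_\ell-2u_r)$), so the product $\gamma_j D_{d+2}$ initially expands into six or seven monomials. I would again use the first form of $\gamma_1$, so the denominator $u_r\ua_\ell-ic_\ell^2\ueta_0$ divides one of the natural factors that appear once the dispersion relation is used to convert occurrences of $c_\ell^2 c_r^2\ueta_0^2$ into $-u_\ell u_r \ua_\ell\ua_r$. Careful grouping—separating the $(\ueta_0^2+u_r^2|\uceta|^2)$ piece from the $[u]\,u_r|\uceta|^2$ piece on the right-hand side of the claimed identity—will guide the bookkeeping. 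The identity for $\gamma_2 D_{d+2}$ is then obtained by the same procedure with the second form of $\gamma_2$, and the parallel structure of the two resulting expressions reflects the $\ell\leftrightarrow r$ exchange that is already visible when comparing the first two lines of the statement.
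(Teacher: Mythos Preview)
Your approach is correct, and for $\check{D}$ and $D_{d+1}$ it essentially coincides with the paper's argument. The genuine difference lies in the treatment of $D_{d+2}$. Where you propose a head-on expansion of $\gamma_j\,D_{d+2}$ (correctly flagged as the bottleneck, with half a dozen monomials to organize), the paper takes a shortcut: it recalls the identity \eqref{prod2} established earlier in the proof of Proposition~\ref{prop1},
\begin{equation*}
\ueta_0\,(D_{d+1}+u_\ell\,D_{d+2}) - u_\ell\,|\uceta|^2\,\check{D}
\;=\; [u]\,\ua_\ell\,(\ueta_0^2+u_r^2\,|\uceta|^2)\,(u_\ell\,\ua_r - i\,c_r^2\,\ueta_0)\,,
\end{equation*}
multiplies through by $\gamma_1$, and substitutes the already-proved expressions for $\gamma_1\,\check{D}$ and $\gamma_1\,D_{d+1}$ to solve for $\gamma_1\,D_{d+2}$ with essentially no fresh algebra. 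The companion identity $\gamma_2\,D_{d+2}$ is then read off via the ratio $\gamma_2/\gamma_1 = i\,c_\ell^2\,\ueta_0^2/(u_r\,\ua_r)$ rather than by a parallel computation. Your direct route would reach the same endpoint, but the paper's recycling of \eqref{prod2} is shorter and also explains structurally why the right-hand side of the $\gamma_1\,D_{d+2}$ formula splits into exactly two pieces: they are the $D_{d+1}$ and $\check{D}$ contributions in that linear relation.

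One small caution on your $\check{D}$ step: the $\ell\leftrightarrow r$ swap you invoke for $\gamma_2\,\check{D}$ is not directly available, since $\check{D}$ carries the asymmetric prefactor $-[u]\,u_r$ (and the target identities likewise both carry $u_r$, not $u_\ell$). Your stated fallback of redoing the analogous computation with $\gamma_2 = -[\rho]\,u_\ell\,\ueta_0/(u_\ell\,\ua_r - i\,c_r^2\,\ueta_0)$ is the correct route there.
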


\begin{proof}
From the definition \eqref{defD}, there holds
\begin{equation*}
\check{D} =[u] \, u_r \, \big( \ua_r \, (u_r \, \ua_\ell -i\, c_\ell^2 \, \ueta_0) 
+\ua_\ell \, (u_\ell \, \ua_r -i\, c_r^2 \, \ueta_0) \big) \, ,
\end{equation*}
and we then use the expression \eqref{defgamma} of $\gamma_1,\gamma_2$ to compute $\gamma_1 \, \check{D}$ 
and $\gamma_2 \, \check{D}$.

The expressions of $\gamma_{1,2} \, D_{d+1}$ follow from the observation that $D_{d+1}$ satisfies
\begin{align*}
\ueta_0 \, D_{d+1} 
&=-(\ueta_0^2 +u_r^2 \, |\uceta|^2) \, (u_r \, \ua_\ell -i \, c_\ell^2 \, \ueta_0) \, (u_\ell \, \ua_r +i \, c_r^2 \, \ueta_0) \\
&=(\ueta_0^2 +u_r^2 \, |\uceta|^2) \, (u_r \, \ua_\ell +i \, c_\ell^2 \, \ueta_0) \, (u_\ell \, \ua_r -i \, c_r^2 \, \ueta_0) \, .
\end{align*}
We then use again \eqref{defgamma}.

To compute the product $\gamma_1 \, D_{d+2}$, we recall the relation \eqref{prod2} which we found in the 
proof of Proposition \ref{prop1}. It reads
\begin{equation*}
\ueta_0 \, (D_{d+1} +u_\ell \, D_{d+2}) -u_\ell \, |\uceta|^2 \check{D} =[u] \, \ua_\ell \, (\ueta_0^2 +u_r^2 \, |\uceta|^2) 
\, (u_\ell \, \ua_r -i\, c_r^2 \, \ueta_0) \, .
\end{equation*}
We multiply the latter relation by $\gamma_1$ and use the previous expressions of $\gamma_1 \, \check{D}$ and 
$\gamma_1 \, D_{d+1}$ to obtain that of $\gamma_1 \, D_{d+2}$. The expression of $\gamma_2 \, D_{d+2}$ is 
then easily deduced by using $\gamma_2/\gamma_1 =i \, c_\ell^2 \, \ueta_0^2 /(u_r \, \ua_r)$.
\end{proof}

\noindent We now compute the kernels $q_2,q_3,q_4$ in the region $\{ k>0, k'>0 \}$.

\begin{proposition}
\label{prop2}
Let us define the quantities
\begin{align}
Q_\ell &:= [\rho] \, [u] \, \underline{\Upsilon} \, u_\ell \, u_r \, \dfrac{\ua_r}{\ua_\ell} \, (\ueta_0^2 +u_r^2 \, |\uceta|^2) 
\, (\ueta_0^2 +u_\ell^2 \, |\uceta|^2) \, \gamma_1 \, (i\, \ueta_0 -u_\ell \, \ubeta_1^-) \, ,\notag \\
Q_r &:= [\rho] \, [u] \, \underline{\Upsilon} \, u_\ell \, u_r \, \dfrac{\ua_\ell}{\ua_r} \, (\ueta_0^2 +u_r^2 \, |\uceta|^2)^2 
\, \gamma_2 \, (i\, \ueta_0 +u_r \, \ubeta_2^-) \, ,\label{defQlr} \\
Q_\sharp &:=2\, [\rho] \, \underline{\Upsilon} \, (\ueta_0^2 +u_r^2 \, |\uceta|^2) \, (\ueta_0^2 +u_\ell \, u_r \, |\uceta|^2) \, 
i \, c_\ell^2 \, c_r^2 \, \ueta_0 \, \left( \dfrac{c_r^2 \, \gamma_2}{\rho_r \, u_r} -\dfrac{c_\ell^2 \, \gamma_1}{\rho_\ell \, u_\ell} 
\right) \, .\notag 
\end{align}
Then the kernels $q_2,q_3,q_4$ defined in \eqref{defq2}, \eqref{defq3} and \eqref{defq4} satisfy
\begin{equation*}
(q_2+q_3+q_4)(k,k') =\left( \dfrac{p''(\rho_\ell)}{2} +\dfrac{c_\ell^2}{\rho_\ell} \right) \, Q_\ell 
+\left( \dfrac{p''(\rho_r)}{2} +\dfrac{c_r^2}{\rho_r} \right) \, Q_r +Q_\sharp \, ,
\end{equation*}
for all $k>0$ and $k'>0$.
\end{proposition}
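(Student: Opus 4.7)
The plan is to make the three integrands explicit for $k,k'>0$ and then collect terms by their dependence on $p''(\rho_{\ell,r})$. For $k>0$ there are no conjugate contributions, so the profile is block-diagonal,
\[
\widehat r(k,z)=\bigl(\gamma_1\,\exp(k\,\ubeta_1^-\,z)\,\underline r_1^-,\ \gamma_2\,\exp(k\,\ubeta_2^-\,z)\,\underline r_2^-\bigr)^T,
\]
with the analogous expression for $\widehat r(k',z)$. Consequently $q_2$ is a pure boundary contribution,
\[
q_2(k,k')=\gamma_1^2\,\sigma^*\,\dif^2\tilde f^d(v_\ell)(\underline r_1^-,\underline r_1^-)-\gamma_2^2\,\sigma^*\,\dif^2\tilde f^d(v_r)(\underline r_2^-,\underline r_2^-),
\]
so all the real work sits in the $z$-integrals defining $q_3$ and $q_4$.

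For those two kernels I would insert the formula for $L(k+k',z)$ given by Lemma~\ref{lem2}. Since $\widehat r(k,\cdot)$ and $\widehat r(k',\cdot)$ are block-diagonal, both $\dif\A(v,\ueta)\cdot\widehat r(k)\cdot\widehat r(k')$ and $\partial_z\bigl(\dif\breve{\A}^d(v)\cdot\widehat r(k)\cdot\widehat r(k')\bigr)$ decompose into a left-sector contribution with exponent $(k+k')\ubeta_1^-$ and a right-sector contribution with exponent $(k+k')\ubeta_2^-$. Pairing with the three exponentials carried by $L$ (two in the left block, one in the right) yields elementary integrals of value $1/\bigl((k+k')(\ubeta_p^+-\ubeta_j^-)\bigr)$. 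The $i(k+k')$ prefactor of \eqref{defq3} and the $(k+k')\ubeta_j^-$ factor produced by $\partial_z$ in \eqref{defq4} kill the $(k+k')$ in the denominator, so the resulting kernel depends on $k,k'$ only through $\gamma_1,\gamma_2$, i.e.\ not at all in the region under consideration.

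Because the Euler fluxes depend linearly on $p(\rho)$, each of $\dif^2\tilde f^d$, $\dif\A(v,\ueta)$ and $\dif\breve{\A}^d(v)$ applied to pairs of eigenvectors splits canonically into a part proportional to $p''(\rho_{\ell,r})$ (coming from the density--density entries) and a part proportional to $c_{\ell,r}^2/\rho_{\ell,r}=p'(\rho_{\ell,r})/\rho_{\ell,r}$ (coming from the remaining entries, after using the $\rho$-homogeneity of the fluxes). The main computational step is to verify that in the sum $q_2+q_3+q_4$ these two families fuse into the single prefactor $\tfrac12 p''(\rho_{\ell,r})+c_{\ell,r}^2/\rho_{\ell,r}$ multiplying the left-sector algebraic factor $Q_\ell$ and the right-sector one $Q_r$ of \eqref{defQlr}; the $\tfrac12$ in front of $p''$ reflects the symmetry of the bilinear form and is a useful consistency check.

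The $p''$-independent remainder then reduces to a linear combination of $\gamma_j\check D$, $\gamma_j D_{d+1}$, $\gamma_j D_{d+2}$ with $j\in\{1,2\}$. Lemma~\ref{lem4} replaces each of these by an explicit monomial in $\ua_\ell,\ua_r,\ueta_0,|\uceta|^2,c_{\ell,r}^2$, and repeated use of the Lopatinskii identity $u_\ell u_r\ua_\ell\ua_r+c_\ell^2c_r^2\ueta_0^2=0$ collapses the sum to $Q_\sharp$; the factor $(\ueta_0^2+u_\ell u_r|\uceta|^2)$ featured there emerges naturally from the $\ubeta_3^+$ term in the left block of $L$, which cross-couples the two sectors. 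The main obstacle is the sheer volume of bookkeeping: one must track the $\ubeta_3^+$ contribution carefully (it appears only in the left block of $L$), keep all factors of $\underline\Upsilon$ in place, and engineer the final cancellations that reduce a sizeable linear combination of monomials to the three compact quantities $Q_\ell$, $Q_r$ and $Q_\sharp$.
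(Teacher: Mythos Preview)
Your outline matches the paper's proof closely: for $k,k'>0$ the profile is block-diagonal, the integrals in $q_3,q_4$ are elementary and the $(k+k')$ factors cancel, one isolates the $p''(\rho_{\ell,r})$ contributions first (the paper's Steps~1 and~2) and then handles the $p''$-independent remainder sector by sector using the explicit Hessians and Lemma~\ref{lem4} (Step~3).

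One correction to your last paragraph, which would mislead you when you actually execute the calculation: the $\ubeta_3^+$ row of $L$ does \emph{not} cross-couple the two sectors. The vector $\widetilde\ell_3$ lives entirely in the left block of $L$, so it pairs only with the left block of $\widehat r$; there is no left-$L$/right-$\widehat r$ interaction anywhere in $q_3$ or $q_4$. The factor $(\ueta_0^2+u_\ell u_r|\uceta|^2)$ in $Q_\sharp$ does not come from $\omega_3$ (which carries $\ueta_0^2-u_\ell u_r|\uceta|^2$, the opposite sign). It appears only at the very end, after the left and right $p''$-free remainders $A_\ell$ and $A_r$ have each been fully simplified: one finds
\[
u_\ell\Big\{u_r\ua_r(u_\ell\ua_\ell+i c_\ell^2\ueta_0)\ubeta_1^- - u_r c_\ell^2|\uceta|^2(u_\ell\ua_r+i c_r^2\ueta_0)\Big\}=-i c_\ell^2 c_r^2\ueta_0(\ueta_0^2+u_\ell u_r|\uceta|^2),
\]
and the analogous identity on the right side with the opposite overall sign, so that the mixed factor is produced independently by each sector and then combined into $Q_\sharp$.
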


\begin{proof}
We first recall the expressions of the second differentials that are involved in the kernels $q_2,q_3,q_4$, see 
\cite[page 1480]{BenzoniRosini}:
\begin{equation}
\label{hessienne1}
\sum_{k=1}^{d-1} \uceta_k \, \dif^2 f^k (v_{\ell,r}) \cdot (v,v) =p''(\rho_{\ell,r}) \, \begin{pmatrix}
0 \\
\rho^2 \, \uceta \\
0 \end{pmatrix} +\dfrac{2}{\rho_{\ell,r}} \, \begin{pmatrix}
0 \\
\uceta \cdot \check{j} \, \check{j} \\
\uceta \cdot \check{j} \, (j_d -u_{\ell,r} \, \rho) \end{pmatrix} \, ,
\end{equation}
\begin{multline}
\label{hessienne2}
\dif^2 \tilde{f}^d (v_{\ell,r}) \cdot (v,v) =\begin{pmatrix}
\dif^2 f^d (v_{\ell,r}) \cdot (v,v) \\
\dif^2 g^d (v_{\ell,r}) \cdot (v,v) \end{pmatrix} =p'' (\rho_{\ell,r}) \, \begin{pmatrix}
0 \\
0 \\
\rho^2 \\
u_{\ell,r} \, \rho^2 \end{pmatrix} +\dfrac{2}{\rho_{\ell,r}} \, \begin{pmatrix}
0 \\
(j_d -u_{\ell,r} \, \rho) \, \check{j} \\
(j_d -u_{\ell,r} \, \rho)^2 \\
0 \end{pmatrix} \\
+\dfrac{1}{\rho_{\ell,r}} \, \begin{pmatrix}
0 \\
0 \\
0 \\
3\, u_{\ell,r} \, (j_d -u_{\ell,r} \, \rho)^2 -u_{\ell,r} \, c_{\ell,r}^2 \, \rho^2 
+2\, c_{\ell,r}^2 \, \rho \, j_d +u_{\ell,r} \, \check{j} \cdot \check{j} \end{pmatrix} \, .
\end{multline}
The proof of Proposition \ref{prop2} splits in several steps. We assume from now on that $k$ and $k'$ 
are both positive and we wish to compute the expression of the kernel $q_2+q_3+q_4$.
\bigskip

$\bullet$ \underline{Step 1: computation of the $p''(\rho_\ell)$ factor.} In this first step, we collect all the terms 
that involve $p''(\rho_\ell)$ in $q_2+q_3+q_4$. The contribution of the kernel $q_2$ equals
\begin{multline}
\label{prop2-1}
\underline{\Upsilon} \, (D_{d+1} +u_\ell \, D_{d+2}) \, \gamma_1^2 \, (i\, \ueta_0 -u_\ell \, \ubeta_1^-)^2 \\
=-[\rho] \, [u] \, \underline{\Upsilon} \, \gamma_1 \, (i\, \ueta_0 -u_\ell \, \ubeta_1^-)^2 \, \Big\{ 
(\ueta_0^2 +u_r^2 \, |\uceta|^2) \, i \, c_r^2 \, \ueta_0 +(u_r \, \ua_r -i \, c_r^2 \, \ueta_0) \, u_\ell \, u_r \, 
|\uceta|^2 \Big\} \, ,
\end{multline}
where we have used Lemma \ref{lem4} to compute the product $(D_{d+1} +u_\ell \, D_{d+2}) \, \gamma_1$.

The contribution of the kernel $q_3$ equals
\begin{align*}
& i\, (k+k') \, \int_0^{+\infty} \dfrac{\omega_1}{\gamma_1} \, (-i \, \uceta^T) \, {\rm e}^{-(k+k') \, \ubeta_1^+ \, z} \, 
\gamma_1^2 \, (i\, \ueta_0 -u_\ell \, \ubeta_1^-)^2 \, \uceta \, {\rm e}^{(k+k') \, \ubeta_1^- \, z} \, \dif z \\
+\, & i\, (k+k') \, \int_0^{+\infty} \dfrac{\omega_3}{\gamma_1} \, (\ueta_0 \, \uceta^T) \, {\rm e}^{-(k+k') \, \ubeta_3^+ \, z} \, 
\gamma_1^2 \, (i\, \ueta_0 -u_\ell \, \ubeta_1^-)^2 \, \uceta \, {\rm e}^{(k+k') \, \ubeta_1^- \, z} \, \dif z \\
=\, & \gamma_1 \, (i\, \ueta_0 -u_\ell \, \ubeta_1^-)^2 \, \left\{ \omega_1 \, \dfrac{|\uceta|^2}{\ubeta_1^+ -\ubeta_1^-} 
+\omega_3 \, |\uceta|^2 \, \dfrac{i\, \ueta_0}{\ubeta_3^+ -\ubeta_1^-} \right\} \, .
\end{align*}
Similarly, the contribution of the kernel $q_4$ reads
\begin{align*}
& -\int_0^{+\infty} \dfrac{\omega_1}{\gamma_1} \, \ubeta_1^+ \, {\rm e}^{-(k+k') \, \ubeta_1^+ \, z} \, \gamma_1^2 \, 
(i\, \ueta_0 -u_\ell \, \ubeta_1^-)^2 \, (k+k') \, \ubeta_1^- \, {\rm e}^{(k+k') \, \ubeta_1^- \, z} \, \dif z \\
& -\int_0^{+\infty} \dfrac{\omega_3}{\gamma_1} \, u_\ell \, |\uceta|^2 \, {\rm e}^{-(k+k') \, \ubeta_3^+ \, z} \, \gamma_1^2 \, 
(i\, \ueta_0 -u_\ell \, \ubeta_1^-)^2 \, (k+k') \, \ubeta_1^- \, {\rm e}^{(k+k') \, \ubeta_1^- \, z} \, \dif z \\
=\, & -\gamma_1 \, (i\, \ueta_0 -u_\ell \, \ubeta_1^-)^2 \, \left\{ 
\omega_1 \, \dfrac{\ubeta_1^+ \, \ubeta_1^-}{\ubeta_1^+ -\ubeta_1^-} 
+\omega_3 \, |\uceta|^2 \, \dfrac{u_\ell \, \ubeta_1^-}{\ubeta_3^+ -\ubeta_1^-} \right\} \, .
\end{align*}
Adding the contributions of $q_3$ and $q_4$ gives the term
\begin{equation}
\label{prop2-2}
\gamma_1 \, (i\, \ueta_0 -u_\ell \, \ubeta_1^-)^2 \, \left\{ -\dfrac{1}{2\, \ua_\ell} \, \left( \dfrac{\ua_\ell^2}{c_\ell^2} 
+c_\ell^2 \, |\uceta|^2 \right) \, \omega_1 \, +u_\ell \, |\uceta|^2 \, \omega_3 \right\} \, .
\end{equation}
We now use the definitions of $\omega_1$ and $\omega_3$, see Lemma \ref{lem2}, and add the contributions 
in \eqref{prop2-1} and \eqref{prop2-2} in order to obtain the $p''(\rho_\ell)$ term in $q_2+q_3+q_4$. We first obtain 
that the sum of the right hand side of \eqref{prop2-1} and the expression in \eqref{prop2-2} equals
\begin{equation*}
-[\rho] \, [u] \, \underline{\Upsilon} \, \gamma_1 \, (i\, \ueta_0 -u_\ell \, \ubeta_1^-)^2 \, 
\dfrac{\ueta_0^2 +u_r^2 \, |\uceta|^2}{2\, c_\ell^2 \, (\ueta_0^2 +u_\ell^2 \, |\uceta|^2)} \, 
(u_\ell \, \ua_\ell +i\, c_\ell^2 \, \ueta_0) \, \left( c_r^2 \, \ueta_0^2 +\dfrac{u_r \, \ua_r}{u_\ell \, \ua_\ell} \, 
u_\ell^2 \, c_\ell^2 \, |\uceta|^2 \right) \, .
\end{equation*}
This last expression is simplified a little further by observing that we have
\begin{equation*}
(i\, \ueta_0 -u_\ell \, \ubeta_1^-) \, (u_\ell \, \ua_\ell +i\, c_\ell^2 \, \ueta_0) =\dfrac{-1}{c_\ell^2 -u_\ell^2} 
\, (u_\ell^2 \, \ua_\ell^2 +c_\ell^4 \, \ueta_0^2) =-c_\ell^2 \, (\ueta_0^2 +u_\ell^2 \, |\uceta|^2) \, ,
\end{equation*}
and
\begin{equation*}
c_r^2 \, \ueta_0^2 +\dfrac{u_r \, \ua_r}{u_\ell \, \ua_\ell} \, u_\ell^2 \, c_\ell^2 \, |\uceta|^2 
=u_\ell \, u_r \, \dfrac{\ua_r}{\ua_\ell} \, \left( c_\ell^2 \, |\uceta|^2 -\dfrac{\ua_\ell^2}{c_\ell^2} \right) 
=u_\ell \, u_r \, \dfrac{\ua_r}{\ua_\ell} \, (\ueta_0^2 +u_\ell^2 \, |\uceta|^2) \, .
\end{equation*}
Eventually, we find that the sum of all the terms that involve $p''(\rho_\ell)$ factorizes as $Q_\ell/2$ where 
$Q_\ell$ is defined in \eqref{defQlr}.
\bigskip

$\bullet$ \underline{Step 2: computation of the $p''(\rho_r)$ factor.} We follow the same strategy as in the first 
step and compute the contribution that involves $p''(\rho_r)$ in each kernel. The contribution of the kernel 
$q_2$ equals
\begin{equation}
\label{prop2-3}
-\underline{\Upsilon} \, (D_{d+1} +u_r \, D_{d+2}) \, \gamma_2^2 \, (i\, \ueta_0 +u_r \, \ubeta_2^-)^2 
=[\rho] \, [u] \, \underline{\Upsilon} \, \gamma_2 \, (i\, \ueta_0 +u_r \, \ubeta_2^-)^2 \, \dfrac{u_\ell \, \ua_\ell}{c_r^2} 
\, \Big\{ i \, \ueta_0 \, u_r \, \ua_r -u_r^2 \, c_r^2 \, |\uceta|^2 \Big\} \, ,
\end{equation}
where we have used Lemma \ref{lem4} to simplify $(D_{d+1} +u_r \, D_{d+2}) \, \gamma_2$. The contribution 
of the kernel $q_3$ equals
\begin{multline*}
i\, (k+k') \, \int_0^{+\infty} \dfrac{\omega_2}{\gamma_2} \, (i \, \uceta^T) \, {\rm e}^{-(k+k') \, \ubeta_2^+ \, z} \, 
\gamma_2^2 \, (i\, \ueta_0 +u_r \, \ubeta_2^-)^2 \, \uceta \, {\rm e}^{(k+k') \, \ubeta_2^- \, z} \, \dif z \\
=-\gamma_2 \, (i\, \ueta_0 +u_r \, \ubeta_2^-)^2 \, \omega_2 \, \dfrac{|\uceta|^2}{\ubeta_2^+ -\ubeta_2^-} \, ,
\end{multline*}
and the contribution of the kernel $q_4$ reads
\begin{multline*}
\int_0^{+\infty} \dfrac{\omega_2}{\gamma_2} \, \ubeta_2^+ \, {\rm e}^{-(k+k') \, \ubeta_2^+ \, z} \, \gamma_2^2 \, 
(i\, \ueta_0 +u_r \, \ubeta_2^-)^2 \, (k+k') \, \ubeta_2^- \, {\rm e}^{(k+k') \, \ubeta_2^- \, z} \, \dif z \\
=\gamma_2 \, (i\, \ueta_0 +u_r \, \ubeta_2^-)^2 \, \omega_2 \, 
\dfrac{\ubeta_2^+ \, \ubeta_2^-}{\ubeta_2^+ -\ubeta_2^-} \, .
\end{multline*}
Adding the contributions of $q_3$ and $q_4$ gives the term
\begin{equation*}
-\gamma_2 \, (i\, \ueta_0 +u_r \, \ubeta_2^-)^2 \, \dfrac{\omega_2}{2\, \ua_r} \, \left( \dfrac{\ua_r^2}{c_r^2} 
+c_r^2 \, |\uceta|^2 \right) \, .
\end{equation*}
When we add the latter term with the expression in \eqref{prop2-3}, we obtain
\begin{equation*}
-[\rho] \, [u] \, \underline{\Upsilon} \, \gamma_2 \, (i\, \ueta_0 +u_r \, \ubeta_2^-)^2 \, (\ueta_0^2 +u_r^2 \, |\uceta|^2) 
\, \dfrac{i\, u_r \, \ueta_0}{2\, \ua_r} \, (u_\ell \, \ua_\ell +i\, c_\ell^2 \, \ueta_0) \, ,
\end{equation*}
and this quantity is further simplified by using the relation
\begin{equation*}
(i\, \ueta_0 +u_r \, \ubeta_2^-) \, i \, \ueta_0 \, (u_\ell \, \ua_\ell +i\, c_\ell^2 \, \ueta_0) =-u_\ell \, \ua_\ell 
\, \dfrac{u_r^2 \, \ua_r^2 +c_r^4 \, \ueta_0^2}{c_r^2 \, (c_r^2 -u_r^2)} 
=-u_\ell \, \ua_\ell \, (\ueta_0^2 +u_r^2 \, |\uceta|^2) \, .
\end{equation*}
Eventually, we find that the sum of all the terms that involve $p''(\rho_r)$ factorizes as $Q_r/2$.
\bigskip

$\bullet$ \underline{Step 3: computation of the remaining terms.} In order to prove Proposition \ref{prop2}, we can 
assume from now on, and without loss of generality that $p''(\rho_\ell) =p''(\rho_r) =0$ in \eqref{hessienne1} and 
\eqref{hessienne2}. With this simplification, we compute
\begin{align}
\sum_{k=1}^{d-1} \uceta_k \, \dif^2 f^k (v_\ell) \cdot (\underline{r}_1^-,\underline{r}_1^-) 
&=-\dfrac{2\, i\, c_\ell^4 \, |\uceta|^2}{\rho_\ell} \, \begin{pmatrix}
0 \\
-i \, \uceta \\
\ubeta_1^- \end{pmatrix} \, ,\label{prop2-4} \\
\dif^2 \tilde{f}^d (v_\ell) \cdot (\underline{r}_1^-,\underline{r}_1^-) &=\dfrac{2\, c_\ell^2}{\rho_\ell} \, 
\begin{pmatrix}
0 \\
-i \, c_\ell^2 \, \ubeta_1^- \uceta \\
c_\ell^2 \, |\uceta|^2 +(i\, \ueta_0 -u_\ell \, \ubeta_1^-)^2 \\
i \, c_\ell^2 \, \ueta_0 \, \ubeta_1^- +u_\ell \, (i\, \ueta_0 -u_\ell \, \ubeta_1^-)^2 \end{pmatrix} \, ,\label{prop2-5} \\
\sum_{k=1}^{d-1} \uceta_k \, \dif^2 f^k (v_r) \cdot (\underline{r}_2^-,\underline{r}_2^-) 
&=\dfrac{2\, i\, c_r^4 \, |\uceta|^2}{\rho_r} \, \begin{pmatrix}
0 \\
i \, \uceta \\
\ubeta_2^- \end{pmatrix} \, ,\label{prop2-6} \\
\dif^2 \tilde{f}^d (v_r) \cdot (\underline{r}_2^-,\underline{r}_2^-) &=\dfrac{2\, c_r^2}{\rho_r} \, 
\begin{pmatrix}
0 \\
i \, c_r^2 \, \ubeta_2^- \uceta \\
c_r^2 \, |\uceta|^2 +(i\, \ueta_0 +u_r \, \ubeta_2^-)^2 \\
-i \, c_r^2 \, \ueta_0 \, \ubeta_2^- +u_r \, (i\, \ueta_0 +u_r \, \ubeta_2^-)^2 \end{pmatrix} \, ,\label{prop2-7}
\end{align}
where in \eqref{prop2-5} and \eqref{prop2-7}, we have used the relations
\begin{equation*}
c_\ell^2 \, (\ubeta_1^-)^2 =c_\ell^2 \, |\uceta|^2 +(i\, \ueta_0 -u_\ell \, \ubeta_1^-)^2 \, ,\quad 
c_r^2 \, (\ubeta_2^-)^2 =c_r^2 \, |\uceta|^2 +(i\, \ueta_0 +u_r \, \ubeta_2^-)^2 \, .
\end{equation*}

With these expressions, let us look first at the kernel $q_2$ in \eqref{defq2}. Using \eqref{prop2-5}, we compute
\begin{multline*}
\sigma^* \, \dif^2 \tilde{f}^d (v_\ell) \cdot (\underline{r}_1^-,\underline{r}_1^-) \, \gamma_1^2 
=\dfrac{2\, c_\ell^2}{\rho_\ell} \, \underline{\Upsilon} \, \gamma_1 \, \Big\{ i \, c_\ell^2 \, \ubeta_1^- \, 
(\ueta_0 \, \gamma_1 \, D_{d+2} -|\uceta|^2 \, \gamma_1 \, \check{D}) +c_\ell^2 \, |\uceta|^2 \, \gamma_1 \, D_{d+1} \\
+(i\, \ueta_0 -u_\ell \, \ubeta_1^-)^2 \, \gamma_1 \, (D_{d+1} +u_\ell \, D_{d+2}) \Big\} \, ,
\end{multline*}
and Lemma \ref{lem4} turns this expression into
\begin{multline}
\label{prop2-8}
\sigma^* \, \dif^2 \tilde{f}^d (v_\ell) \cdot (\underline{r}_1^-,\underline{r}_1^-) \, \gamma_1^2 \\
=\dfrac{2\, c_\ell^2}{\rho_\ell} \, [\rho] \, \underline{\Upsilon} \, (\ueta_0^2 +u_r^2 \, |\uceta|^2) \, \gamma_1 \, 
\Big\{ u_r \, \ua_r \, (u_\ell \, \ua_\ell +i\, c_\ell^2 \, \ueta_0) \, \ubeta_1^- 
-u_r \, c_\ell^2 \, |\uceta|^2 \, (u_\ell \, \ua_r +i\, c_r^2 \, \ueta_0) \Big\} \\
-\dfrac{2\, c_\ell^2}{\rho_\ell} \, [\rho] \, [u] \, \underline{\Upsilon} \, \gamma_1 \, (i\, \ueta_0 -u_\ell \, \ubeta_1^-)^2 \, 
\Big\{ (\ueta_0^2 +u_r^2 \, |\uceta|^2) \, i\, c_r^2 \, \ueta_0 +u_\ell \, u_r \, |\uceta|^2 \, (u_r \, \ua_r -i\, c_r^2 \, \ueta_0) 
\Big\} \, .
\end{multline}
Similarly, we derive the relation
\begin{multline}
\label{prop2-9}
\sigma^* \, \dif^2 \tilde{f}^d (v_r) \cdot (\underline{r}_2^-,\underline{r}_2^-) \, \gamma_2^2 \\
=-\dfrac{2\, c_r^2}{\rho_r} \, [\rho] \, \underline{\Upsilon} \, (\ueta_0^2 +u_r^2 \, |\uceta|^2) \, \gamma_2 \, 
\Big\{ u_\ell \, \ua_\ell \, (u_r \, \ua_r +i\, c_r^2 \, \ueta_0) \, \ubeta_2^- 
+u_\ell \, c_r^2 \, |\uceta|^2 \, (u_r \, \ua_\ell +i\, c_\ell^2 \, \ueta_0) \Big\} \\
+\dfrac{2\, c_r^2}{\rho_r} \, [\rho] \, [u] \, \underline{\Upsilon} \, \gamma_2 \, (i\, \ueta_0 +u_r \, \ubeta_2^-)^2 \, 
\Big\{ (\ueta_0^2 +u_r^2 \, |\uceta|^2) \, i\, c_\ell^2 \, \ueta_0 +u_r^2 \, |\uceta|^2 \, (u_\ell \, \ua_\ell -i\, c_\ell^2 \, \ueta_0) 
\Big\} \, .
\end{multline}
\bigskip

The kernels $q_3,q_4$ both read as a sum of two contributions, one from the system ahead of the phase boundary, 
and one from the system behind the phase boundary. We compute each of these contributions separately in order to 
combine them with either \eqref{prop2-8} or \eqref{prop2-9}. Using \eqref{prop2-4}, the `left' contribution of $q_3$ 
equals
\begin{align}
& i\, (k+k') \, \int_0^{+\infty} \dfrac{\omega_1}{\gamma_1} \, \widetilde{\ell}_1 \, {\rm e}^{-(k+k') \, \ubeta_1^+ \, z} \, 
\gamma_1^2 \, \dfrac{-2\, i\, c_\ell^4 \, |\uceta|^2}{\rho_\ell} \, \begin{pmatrix}
0 \\
-i \, \uceta \\
\ubeta_1^- \end{pmatrix} \, {\rm e}^{(k+k') \, \ubeta_1^- \, z} \, \dif z \notag \\
+\, & i\, (k+k') \, \int_0^{+\infty} \dfrac{\omega_3}{\gamma_1} \, \widetilde{\ell}_3 \, {\rm e}^{-(k+k') \, \ubeta_3^+ \, z} \, 
\gamma_1^2 \, \dfrac{-2\, i\, c_\ell^4 \, |\uceta|^2}{\rho_\ell} \, \begin{pmatrix}
0 \\
-i \, \uceta \\
\ubeta_1^- \end{pmatrix} \, {\rm e}^{(k+k') \, \ubeta_1^- \, z} \, \dif z \notag \\
=\, & \dfrac{2\, c_\ell^4}{\rho_\ell} \, \gamma_1 \, \omega_1 \, \dfrac{|\uceta|^2}{\ubeta_1^+ -\ubeta_1^-} \, 
\widetilde{\ell}_1 \, \begin{pmatrix}
0 \\
-i \, \uceta \\
\ubeta_1^- \end{pmatrix} +\dfrac{2\, c_\ell^4}{\rho_\ell} \, \gamma_1 \, \omega_3 \, 
\dfrac{|\uceta|^2}{\ubeta_3^+ -\ubeta_1^-} \, \widetilde{\ell}_3 \, \begin{pmatrix}
0 \\
-i \, \uceta \\
\ubeta_1^- \end{pmatrix} \, ,\label{prop2-10}
\end{align}
and, similarly (using now \eqref{prop2-6} rather than \eqref{prop2-4}), the `right' contribution of $q_3$ equals
\begin{multline}
\label{prop2-11}
i\, (k+k') \, \int_0^{+\infty} \dfrac{\omega_2}{\gamma_2} \, \widetilde{\ell}_2 \, {\rm e}^{-(k+k') \, \ubeta_2^+ \, z} \, 
\gamma_2^2 \, \dfrac{2\, i\, c_r^4 \, |\uceta|^2}{\rho_r} \, \begin{pmatrix}
0 \\
i \, \uceta \\
\ubeta_2^- \end{pmatrix} \, {\rm e}^{(k+k') \, \ubeta_2^- \, z} \, \dif z \\
=-\dfrac{2\, c_r^4}{\rho_r} \, \gamma_2 \, \omega_2 \, \dfrac{|\uceta|^2}{\ubeta_2^+ -\ubeta_2^-} \, 
\widetilde{\ell}_2 \, \begin{pmatrix}
0 \\
i \, \uceta \\
\ubeta_2^- \end{pmatrix} \, .
\end{multline}

The kernel $q_4$ is computed by first observing that the vectors $\dif^2 f^d (v_\ell) \cdot 
(\underline{r}_1^-,\underline{r}_1^-)$, and $\dif^2 f^d (v_r) \cdot (\underline{r}_2^-,\underline{r}_2^-)$ are 
obtained by retaining only the three first coordinates in \eqref{prop2-5} and \eqref{prop2-7}:
\begin{equation*}
\dif^2 f^d (v_\ell) \cdot (\underline{r}_1^-,\underline{r}_1^-) =\dfrac{2\, c_\ell^4 \, \ubeta_1^-}{\rho_\ell} \, 
\begin{pmatrix}
0 \\
-i \, \uceta \\
\ubeta_1^- \end{pmatrix} \, ,\quad \dif^2 f^d (v_r) \cdot (\underline{r}_2^-,\underline{r}_2^-) 
=\dfrac{2\, c_r^4 \, \ubeta_2^-}{\rho_r} \, \begin{pmatrix}
0 \\
i \, \uceta \\
\ubeta_2^- \end{pmatrix} \, .
\end{equation*}
Consequently, the `left' contribution of $q_4$ equals
\begin{align}
& -\int_0^{+\infty} \dfrac{\omega_1}{\gamma_1} \, \widetilde{\ell}_1 \, {\rm e}^{-(k+k') \, \ubeta_1^+ \, z} \, \gamma_1^2 \, 
\dfrac{2\, c_\ell^4 \, \ubeta_1^-}{\rho_\ell} \, 
\begin{pmatrix}
0 \\
-i \, \uceta \\
\ubeta_1^- \end{pmatrix} \, (k+k') \, \ubeta_1^- \, {\rm e}^{(k+k') \, \ubeta_1^- \, z} \, \dif z \notag \\
& -\int_0^{+\infty} \dfrac{\omega_3}{\gamma_1} \, \widetilde{\ell}_1 \, {\rm e}^{-(k+k') \, \ubeta_3^+ \, z} \, \gamma_1^2 \, 
\dfrac{2\, c_\ell^4 \, \ubeta_1^-}{\rho_\ell} \, 
\begin{pmatrix}
0 \\
-i \, \uceta \\
\ubeta_1^- \end{pmatrix} \, (k+k') \, \ubeta_1^- \, {\rm e}^{(k+k') \, \ubeta_1^- \, z} \, \dif z \notag \\
=\, & -\dfrac{2\, c_\ell^4}{\rho_\ell} \, \gamma_1 \, \omega_1 \, \dfrac{(\ubeta_1^-)^2}{\ubeta_1^+ -\ubeta_1^-} \, 
\widetilde{\ell}_1 \, \begin{pmatrix}
0 \\
-i \, \uceta \\
\ubeta_1^- \end{pmatrix} -\dfrac{2\, c_\ell^4}{\rho_\ell} \, \gamma_1 \, \omega_3 \, 
\dfrac{(\ubeta_1^-)^2}{\ubeta_3^+ -\ubeta_1^-} \, \widetilde{\ell}_3 \, \begin{pmatrix}
0 \\
-i \, \uceta \\
\ubeta_1^- \end{pmatrix} \, ,\label{prop2-12}
\end{align}
and the `right' contribution of $q_4$ equals
\begin{equation}
\label{prop2-13}
\dfrac{2\, c_r^4}{\rho_r} \, \gamma_2 \, \omega_2 \, \dfrac{(\ubeta_2^-)^2}{\ubeta_2^+ -\ubeta_2^-} \, 
\widetilde{\ell}_2 \, \begin{pmatrix}
0 \\
i \, \uceta \\
\ubeta_2^- \end{pmatrix} \, .
\end{equation}

The `left' contribution of $q_3+q_4$ is obtained by adding the expressions in \eqref{prop2-10} and \eqref{prop2-12}, 
which gives
\begin{align}
-\dfrac{2\, c_\ell^4}{\rho_\ell} \, &\gamma_1 \, \omega_1 \, \dfrac{(\ubeta_1^-)^2 -|\uceta|^2}{\ubeta_1^+ -\ubeta_1^-} \, 
\widetilde{\ell}_1 \, \begin{pmatrix}
0 \\
-i \, \uceta \\
\ubeta_1^- \end{pmatrix} -\dfrac{2\, c_\ell^4}{\rho_\ell} \, \gamma_1 \, \omega_3 \, 
\dfrac{(\ubeta_1^-)^2 -|\uceta|^2}{\ubeta_3^+ -\ubeta_1^-} \, \widetilde{\ell}_3 \, \begin{pmatrix}
0 \\
-i \, \uceta \\
\ubeta_1^- \end{pmatrix} \notag \\
&=-\dfrac{2\, c_\ell^2}{\rho_\ell} \, \gamma_1 \, (i\, \ueta_0 -u_\ell \, \ubeta_1^-)^2 \, \left\{ 
\omega_1 \, \widetilde{\ell}_1 \, \begin{pmatrix}
0 \\
-i \, \uceta \\
\ubeta_1^- \end{pmatrix} +\omega_3 \, \widetilde{\ell}_3 \, \begin{pmatrix}
0 \\
-i \, \uceta \\
\ubeta_1^- \end{pmatrix} \right\} \notag \\
&=\dfrac{c_\ell^2}{\rho_\ell \, \ua_\ell} \, \gamma_1 \, (i\, \ueta_0 -u_\ell \, \ubeta_1^-)^2 \, \Big\{ 
(\ueta_0^2 +u_\ell^2 \, |\uceta|^2 -2\, c_\ell^2 \, |\uceta|^2) \, \omega_1 +2\, u_\ell \, \ua_\ell \, |\uceta|^2 
\, \omega_3 \Big\} \, .\label{prop2-14}
\end{align}
The `right' contribution of $q_3+q_4$ is obtained by adding the expressions in \eqref{prop2-11} and \eqref{prop2-13}, 
which gives
\begin{multline}
\label{prop2-15}
\dfrac{2\, c_r^4}{\rho_r} \, \gamma_2 \, \omega_2 \, \dfrac{(\ubeta_2^-)^2 -|\uceta|^2}{\ubeta_2^+ -\ubeta_2^-} \, 
\widetilde{\ell}_2 \, \begin{pmatrix}
0 \\
i \, \uceta \\
\ubeta_2^- \end{pmatrix} =\dfrac{2\, c_r^2}{\rho_r} \, \gamma_2 \, \omega_2 \, (i\, \ueta_0 +u_r \, \ubeta_2^-)^2 
\, \dfrac{\ubeta_2^+ \, \ubeta_2^- -|\uceta|^2}{\ubeta_2^+ -\ubeta_2^-} \\
=\dfrac{c_r^2}{\rho_r \, \ua_r} \, \gamma_2 \, (i\, \ueta_0 +u_r \, \ubeta_2^-)^2 \, (\ueta_0^2 +u_r^2 \, |\uceta|^2 
-2\, c_r^2 \, |\uceta|^2) \, \omega_2 \, .
\end{multline}

We can now compute the `left' contribution of the full kernel $q_2+q_3+q_4$ by combining the expression in 
\eqref{prop2-8} with the one in \eqref{prop2-14}. We use here the expressions of $\omega_1$ and $\omega_3$ 
given in Lemma \ref{lem2}. The sum of \eqref{prop2-8} and \eqref{prop2-14} reads
\begin{align*}
A_\ell :=&\dfrac{2\, c_\ell^2}{\rho_\ell} \, [\rho] \, \underline{\Upsilon} \, (\ueta_0^2 +u_r^2 \, |\uceta|^2) \, \gamma_1 \, 
\Big\{ u_r \, \ua_r \, (u_\ell \, \ua_\ell +i\, c_\ell^2 \, \ueta_0) \, \ubeta_1^- 
-u_r \, c_\ell^2 \, |\uceta|^2 \, (u_\ell \, \ua_r +i\, c_r^2 \, \ueta_0) \Big\} \\
&-\dfrac{2\, c_\ell^2}{\rho_\ell} \, [\rho] \, [u] \, \underline{\Upsilon} \, \gamma_1 \, (i\, \ueta_0 -u_\ell \, \ubeta_1^-)^2 \, 
\Big\{ (\ueta_0^2 +u_r^2 \, |\uceta|^2) \, i\, c_r^2 \, \ueta_0 +u_\ell \, u_r \, |\uceta|^2 \, (u_r \, \ua_r -i\, c_r^2 \, \ueta_0) 
\Big\} \\
&+\dfrac{c_\ell^2}{\rho_\ell \, \ua_\ell} \, [\rho] \, [u] \, \underline{\Upsilon} \, \gamma_1 \, (i\, \ueta_0 -u_\ell \, \ubeta_1^-)^2 
\, \left\{ 2\, u_\ell \, \ua_\ell \, |\uceta|^2 \, [u] \, \dfrac{\ueta_0^2 -u_\ell \, u_r \, |\uceta|^2}{\ueta_0^2 +u_\ell^2 \, |\uceta|^2} 
\, (u_r \, \ua_r -i\, c_r^2 \, \ueta_0) \right. \\
&\left. +(\ueta_0^2 +u_\ell^2 \, |\uceta|^2 -2\, c_\ell^2 \, |\uceta|^2) \, i\, u_\ell \, \ueta_0 \, (u_r \, \ua_r -i\, c_r^2 \, \ueta_0) 
\, \dfrac{\ueta_0^2 +u_r^2 \, |\uceta|^2}{\ueta_0^2 +u_\ell^2 \, |\uceta|^2} \right\} \, .
\end{align*}
The three last rows in the definition of $A_\ell$ are factorized after a little bit of algebra, and we get
\begin{align*}
A_\ell =&\dfrac{2\, c_\ell^2}{\rho_\ell} \, [\rho] \, \underline{\Upsilon} \, (\ueta_0^2 +u_r^2 \, |\uceta|^2) \, \gamma_1 \, 
\Big\{ u_r \, \ua_r \, (u_\ell \, \ua_\ell +i\, c_\ell^2 \, \ueta_0) \, \ubeta_1^- 
-u_r \, c_\ell^2 \, |\uceta|^2 \, (u_\ell \, \ua_r +i\, c_r^2 \, \ueta_0) \Big\} \\
&-\dfrac{1}{\rho_\ell} \, [\rho] \, [u] \, \underline{\Upsilon} \, \gamma_1 \, (i\, \ueta_0 -u_\ell \, \ubeta_1^-)^2 
\, u_\ell \, u_r \, \dfrac{\ua_r}{\ua_\ell} \,  (u_\ell \, \ua_\ell +i\, c_\ell^2 \, \ueta_0) \, (\ueta_0^2 +u_r^2 \, |\uceta|^2) \, .
\end{align*}
We have thus shown that the `left' contribution $A_\ell$ of $q_2+q_3+q_4$ reads
\begin{equation}
\label{finalAl}
A_\ell =\dfrac{2\, c_\ell^2}{\rho_\ell} \, [\rho] \, \underline{\Upsilon} \, (\ueta_0^2 +u_r^2 \, |\uceta|^2) \, \gamma_1 \, 
\Big\{ u_r \, \ua_r \, (u_\ell \, \ua_\ell +i\, c_\ell^2 \, \ueta_0) \, \ubeta_1^- 
-u_r \, c_\ell^2 \, |\uceta|^2 \, (u_\ell \, \ua_r +i\, c_r^2 \, \ueta_0) \Big\} +\dfrac{c_\ell^2}{\rho_\ell} \, Q_\ell \, .
\end{equation}
\bigskip

The `right' contribution of $q_2+q_3+q_4$ by combining the expression in \eqref{prop2-9} (with a minus sign, recall 
the definition \eqref{defq2} of the kernel $q_2$) with the one in \eqref{prop2-15}:
\begin{align*}
A_r :=&\dfrac{2\, c_r^2}{\rho_r} \, [\rho] \, \underline{\Upsilon} \, (\ueta_0^2 +u_r^2 \, |\uceta|^2) \, \gamma_2 \, 
\Big\{ u_\ell \, \ua_\ell \, (u_r \, \ua_r +i\, c_r^2 \, \ueta_0) \, \ubeta_2^- 
+u_\ell \, c_r^2 \, |\uceta|^2 \, (u_r \, \ua_\ell +i\, c_\ell^2 \, \ueta_0) \Big\} \\
&-\dfrac{2\, c_r^2}{\rho_r} \, [\rho] \, [u] \, \underline{\Upsilon} \, \gamma_2 \, (i\, \ueta_0 +u_r \, \ubeta_2^-)^2 \, 
\Big\{ (\ueta_0^2 +u_r^2 \, |\uceta|^2) \, i\, c_\ell^2 \, \ueta_0 +u_r^2 \, |\uceta|^2 \, (u_\ell \, \ua_\ell -i\, c_\ell^2 \, \ueta_0) 
\Big\} \\
&+\dfrac{c_r^2}{\rho_r \, \ua_r} \, \gamma_2 \, (i\, \ueta_0 +u_r \, \ubeta_2^-)^2 \, (\ueta_0^2 +u_r^2 \, |\uceta|^2 
-2\, c_r^2 \, |\uceta|^2) \, \omega_2 \, .
\end{align*}
Once again, the last two rows are factorized after a few calculations that we skip, and we get
\begin{align*}
A_r =&\dfrac{2\, c_r^2}{\rho_r} \, [\rho] \, \underline{\Upsilon} \, (\ueta_0^2 +u_r^2 \, |\uceta|^2) \, \gamma_2 \, 
\Big\{ u_\ell \, \ua_\ell \, (u_r \, \ua_r +i\, c_r^2 \, \ueta_0) \, \ubeta_2^- 
+u_\ell \, c_r^2 \, |\uceta|^2 \, (u_r \, \ua_\ell +i\, c_\ell^2 \, \ueta_0) \Big\} \\
&-\dfrac{1}{\rho_r} \, [\rho] \, [u] \, \underline{\Upsilon} \, \gamma_2 \, (i\, \ueta_0 +u_r \, \ubeta_2^-)^2 \, 
u_\ell \, u_r \, \dfrac{\ua_\ell}{\ua_r} \, (u_r \, \ua_r +i\, c_r^2 \, \ueta_0) \, (\ueta_0^2 +u_r^2 \, |\uceta|^2) \, .
\end{align*}
We have thus shown that the `left' contribution $A_\ell$ of $q_2+q_3+q_4$ reads
\begin{equation}
\label{finalAr}
A_r =\dfrac{2\, c_r^2}{\rho_r} \, [\rho] \, \underline{\Upsilon} \, (\ueta_0^2 +u_r^2 \, |\uceta|^2) \, \gamma_2 \, 
\Big\{ u_\ell \, \ua_\ell \, (u_r \, \ua_r +i\, c_r^2 \, \ueta_0) \, \ubeta_2^- 
+u_\ell \, c_r^2 \, |\uceta|^2 \, (u_r \, \ua_\ell +i\, c_\ell^2 \, \ueta_0) \Big\} +\dfrac{c_r^2}{\rho_r} \, Q_r \, .
\end{equation}
\bigskip

The final step of the proof consists in simplifying the remaining terms in $A_\ell$ and $A_r$. More specifically, the 
first factor in the expression \eqref{finalAl} of $A_\ell$ can be simplied as follows:
\begin{align*}
u_\ell \, \Big\{ u_r \, \ua_r \, &(u_\ell \, \ua_\ell +i\, c_\ell^2 \, \ueta_0) \, \ubeta_1^- 
-u_r \, c_\ell^2 \, |\uceta|^2 \, (u_\ell \, \ua_r +i\, c_r^2 \, \ueta_0) \Big\} \\
=\, & u_r \, \ua_r \, (u_\ell \, \ua_\ell +i\, c_\ell^2 \, \ueta_0) \, (u_\ell \, \ubeta_1^- -i\, \ueta_0) 
-c_\ell^2 \, \ueta_0^2 \, (u_r \, \ua_r +i\, c_r^2 \, \ueta_0) 
-u_\ell \, u_r \, c_\ell^2 \, |\uceta|^2 \, (u_\ell \, \ua_r +i\, c_r^2 \, \ueta_0) \\
=\, & u_r \, \ua_r \, c_\ell^2 \, (\ueta_0^2 +u_\ell^2 \, |\uceta|^2) -c_\ell^2 \, \ueta_0^2 \, (u_r \, \ua_r +i\, c_r^2 \, \ueta_0) 
-u_\ell \, u_r \, c_\ell^2 \, |\uceta|^2 \, (u_\ell \, \ua_r +i\, c_r^2 \, \ueta_0) \\
=\, & -i \, c_\ell^2 \, c_r^2 \, \ueta_0 \, (\ueta_0^2 +u_\ell \, u_r \, |\uceta|^2) \, .
\end{align*}
Similarly, the first factor in the expression \eqref{finalAr} of $A_r$ can be simplied by using
\begin{equation*}
u_r \, \Big\{ u_\ell \, \ua_\ell \, (u_r \, \ua_r +i\, c_r^2 \, \ueta_0) \, \ubeta_2^- 
+u_\ell \, c_r^2 \, |\uceta|^2 \, (u_r \, \ua_\ell +i\, c_\ell^2 \, \ueta_0) \Big\} 
=i \, c_\ell^2 \, c_r^2 \, \ueta_0 \, (\ueta_0^2 +u_\ell \, u_r \, |\uceta|^2) \, .
\end{equation*}
Using these two last simplifications, we can add \eqref{finalAl} and \eqref{finalAr} and obtain
\begin{equation*}
(q_2+q_3+q_4)(k,k') =\dfrac{c_\ell^2}{\rho_\ell} \, Q_\ell +\dfrac{c_r^2}{\rho_r} \, Q_r +Q_\sharp \, ,
\end{equation*}
with $Q_\sharp$ defined in \eqref{defQlr}. This completes the proof of Proposition \ref{prop2}.
\end{proof}

\noindent We now compute the kernel $q_2+q_3+q_4$ in the domain $\{ k>0,k'>0,k+k'>0 \}$.

\begin{proposition}
\label{prop3}
Let $Q_\ell$ and $Q_r$ be defined in \eqref{defQlr} and let us define
\begin{equation*}
Q_\flat :=-2\, [\rho] \, [u] \, \underline{\Upsilon} \, (\ueta_0^2 +u_r^2 \, |\uceta|^2) \, u_\ell \, u_r \, |\uceta|^2 \, 
\left\{ \dfrac{c_\ell^4 \, \ua_r}{\rho_\ell \, \ua_\ell} \, \overline{\gamma_1} \, (i\, \ueta_0 -u_\ell \, \ubeta_1^+) 
+\dfrac{c_r^4 \, \ua_\ell}{\rho_r \, \ua_r} \, \overline{\gamma_2} \, (i\, \ueta_0 +u_r \, \ubeta_2^+) \right\} \, .
\end{equation*}
Then the kernels $q_2,q_3,q_4$ defined in \eqref{defq2}, \eqref{defq3} and \eqref{defq4} satisfy
\begin{equation*}
(q_2+q_3+q_4)(k,k') =\left\{ \left( \dfrac{p''(\rho_\ell)}{2} -\dfrac{c_\ell^2}{\rho_\ell} \right) \, \overline{Q_\ell} 
+\left( \dfrac{p''(\rho_r)}{2} -\dfrac{c_r^2}{\rho_r} \right) \, \overline{Q_r} +Q_\flat \right\} \, \left( 1+\dfrac{k'}{k} \right) \, ,
\end{equation*}
for all $(k,k')$ such that $k>0$, $k'<0$ and $k+k'>0$.
\end{proposition}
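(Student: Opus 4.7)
The proof follows the same three-step structure as that of Proposition \ref{prop2}: first collect the coefficient of $p''(\rho_\ell)$, then that of $p''(\rho_r)$, and finally the remaining `geometric' contributions. The essential modification is that for $k'<0$, the mode expansion of $\widehat{r}(k',z)$ must use the `$+$' modes with conjugate coefficients in order to remain integrable on $\{z\geq 0\}$: concretely, $\widehat{r}(k',z) = \overline{\gamma_1}\,\underline{r}_1^+\exp(k'\ubeta_1^+ z) + \overline{\gamma_2}\,\underline{r}_2^+\exp(k'\ubeta_2^+ z)$, which decays at infinity since $\re \ubeta_q^+>0$ and $k'<0$. Every contribution of Proposition \ref{prop2} attached to the $k'$-factor is replaced here by its `$+$'-conjugate analogue, while the $k$-factor is unchanged, so the diagonal bilinear couplings $(\underline{r}_q^-,\underline{r}_q^-)$ become the mixed couplings $(\underline{r}_q^-,\underline{r}_q^+)$.

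For $q_2$, the second differentials \eqref{hessienne1}--\eqref{hessienne2} are evaluated on these mixed pairs, using that $\underline{r}_q^+=\overline{\underline{r}_q^-}$ and $\ubeta_q^+=-\overline{\ubeta_q^-}$, so that quantities like $(\ubeta_q^-)^2$ are replaced by $\ubeta_q^-\ubeta_q^+=-|\ubeta_q^-|^2$. For $q_3$ and $q_4$, the integrals now pair $\exp(k\ubeta_q^- z)$ with $\exp(k'\ubeta_s^+ z)$ and integrate them against $L(k+k',z)\sim \exp(-(k+k')\ubeta_p^+ z)$; the elementary integrals produce rational denominators in $k$ alone, and combined with the prefactor $(k+k')$ of $q_3$ yield the universal prefactor $(1+k'/k)$, in complete analogy with the calculation of $q_5$ in Lemma \ref{lem3}. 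The conjugation $Q_\ell\to\overline{Q_\ell}$ and $Q_r\to\overline{Q_r}$ emerges because the substitutions $\gamma_q^2 \to \gamma_q\overline{\gamma_q}=|\gamma_q|^2$ and $\ubeta_q^-\to\ubeta_q^+$, together with $\underline{r}_q^-\to\overline{\underline{r}_q^-}$, act essentially as complex conjugation on the expressions \eqref{defQlr} that were produced in Steps~1 and~2 of Proposition \ref{prop2}.

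Step~3 (the terms independent of $p''(\rho_{\ell,r})$) is again the most delicate: one relies on Lemma \ref{lem4} and on the Lopatinskii relation $u_\ell u_r\ua_\ell\ua_r + c_\ell^2 c_r^2\ueta_0^2=0$ to factorize the remaining expressions, mirroring the simplifications that led to \eqref{finalAl} and \eqref{finalAr}. The sign flip of the $c_{\ell,r}^2/\rho_{\ell,r}$ prefactor (from $+$ in Proposition \ref{prop2} to $-$ here), as well as the replacement of $Q_\sharp$ by $Q_\flat$ (which features $\overline{\gamma_{1,2}}$ and $\ubeta_{1,2}^+$), follow from the same conjugation mechanism. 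The main obstacle is bookkeeping rather than genuine novelty: one has to check that the cross bilinear contributions in $q_2,q_3,q_4$ combine in a compatible way so that the final answer organises into the three blocks $\overline{Q_\ell}$, $\overline{Q_r}$, $Q_\flat$, each multiplied by the universal factor $(1+k'/k)$, and this requires propagating the substitutions $\gamma_q\to\overline{\gamma_q}$, $\ubeta_q^-\to\ubeta_q^+$ through every identity of Proposition \ref{prop2}'s proof while preserving the same cancellations.
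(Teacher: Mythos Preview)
Your outline has the right shape, but the mechanism you propose for the factor $(1+k'/k)$ is incorrect, and this hides the genuine work of the proof. You claim that the elementary integrals in $q_3,q_4$ produce denominators in $k$ alone, so that the prefactor $(k+k')$ of $q_3$ yields $(1+k'/k)$ automatically. This is false on two counts. First, $q_2$ involves no integral at all: it is a boundary term and is manifestly \emph{constant} in $(k,k')$, so it cannot acquire a $(1+k'/k)$ factor by your mechanism. Second, the $\omega_3$ part of $L(k+k',z)$ produces integrals with exponent $-(k+k')\ubeta_3^+ z + k\ubeta_1^- z + k'\ubeta_1^+ z$, whose denominator $k(\ubeta_3^+-\ubeta_1^-)+k'(\ubeta_3^+-\ubeta_1^+)$ depends on both $k$ and $k'$; these are the ``diamond'' terms in the paper's \eqref{prop3-11} and \eqref{prop3-13}, and they do \emph{not} individually carry a $(1+k'/k)$ factor.

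The actual mechanism, which the paper verifies by explicit computation, is a pair of cancellations that your heuristic does not predict. In Steps~1 and~2 one must check that the constant contribution from $q_2$ plus the constant part of $q_3+q_4$ equals $\overline{Q_{\ell,r}}/2$, \emph{and separately} that the $k'/k$ coefficient from $q_3+q_4$ also equals $\overline{Q_{\ell,r}}/2$; these are two distinct identities, not one. In Step~3 the diamond terms must be shown to combine into a constant (using $\widetilde\ell_3$), and then that constant together with the constant part of the $\omega_1$ terms must cancel \emph{exactly} against the $q_2$ contribution \eqref{prop3-9} (resp.\ \eqref{prop3-10}), leaving only the $(1+k'/k)$ piece. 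Your conjugation heuristic also does not explain the sign flip from $+c_{\ell,r}^2/\rho_{\ell,r}$ to $-c_{\ell,r}^2/\rho_{\ell,r}$, which in the paper's argument comes from the identity $(\ubeta_q^+)^2-|\uceta|^2$ versus $(\ubeta_q^-)^2-|\uceta|^2$ after the mixed pairing; this is where $Q_\flat$ splits off from $\overline{Q_{\ell,r}}$ and must be computed, not inferred.
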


\begin{proof}
We split again the proof in several steps, as was done in the proof of Proposition \ref{prop2}.
\bigskip

$\bullet$ \underline{Step 1: computation of the $p''(\rho_\ell)$ factor.} We collect again the contributions 
that involve $p''(\rho_\ell)$. The contribution of the kernel $q_2$ equals
\begin{multline}
\label{prop3-1}
\underline{\Upsilon} \, (D_{d+1} +u_\ell \, D_{d+2}) \, |\gamma_1|^2 \, (-i\, \ueta_0 +u_\ell \, \ubeta_1^-) \, 
(i\, \ueta_0 -u_\ell \, \ubeta_1^+) \\
=-[\rho] \, [u] \, \underline{\Upsilon} \, \overline{\gamma_1} \, |i\, \ueta_0 -u_\ell \, \ubeta_1^-|^2 \, \Big\{ 
(\ueta_0^2 +u_r^2 \, |\uceta|^2) \, i \, c_r^2 \, \ueta_0 +(u_r \, \ua_r -i \, c_r^2 \, \ueta_0) \, u_\ell \, u_r \, |\uceta|^2 
\Big\} \, ,
\end{multline}
where the product $(D_{d+1} +u_\ell \, D_{d+2}) \, \gamma_1$ has already been computed when deriving 
\eqref{prop2-1}.

The contribution of the kernel $q_3$ equals
\begin{align*}
& i\, (k+k') \, \int_0^{+\infty} \dfrac{\omega_1}{\gamma_1} \, (-i \, \uceta^T) \, {\rm e}^{-(k+k') \, \ubeta_1^+ \, z} \, 
|\gamma_1|^2 \, |i\, \ueta_0 -u_\ell \, \ubeta_1^-|^2 \, \uceta \, {\rm e}^{k \, \ubeta_1^- \, z} \, 
{\rm e}^{k' \, \ubeta_1^+ \, z} \, \dif z \\
+\, & i\, (k+k') \, \int_0^{+\infty} \dfrac{\omega_3}{\gamma_1} \, (\ueta_0 \, \uceta^T) \, {\rm e}^{-(k+k') \, \ubeta_3^+ \, z} \, 
|\gamma_1|^2 \, |i\, \ueta_0 -u_\ell \, \ubeta_1^-|^2 \, \uceta \, {\rm e}^{k \, \ubeta_1^- \, z} \, 
{\rm e}^{k' \, \ubeta_1^+ \, z} \, \dif z \\
=\, & \overline{\gamma_1} \, |i\, \ueta_0 -u_\ell \, \ubeta_1^-|^2 \, \left\{ \left( 1+\dfrac{k'}{k} \right) \, 
\dfrac{\omega_1 \, |\uceta|^2}{\ubeta_1^+ -\ubeta_1^-} 
+\dfrac{i\, \ueta_0 \, (k+k')\, u_\ell \, |\uceta|^2 \, \omega_3}{k\, (i\, \ueta_0 -u_\ell \, \ubeta_1^-) 
+k' \, (i\, \ueta_0 -u_\ell \, \ubeta_1^+)} \right\} \, .
\end{align*}
Similarly, the contribution of the kernel $q_4$ reads
\begin{align*}
& -\int_0^{+\infty} \dfrac{\omega_1}{\gamma_1} \, \ubeta_1^+ \, {\rm e}^{-(k+k') \, \ubeta_1^+ \, z} \, |\gamma_1|^2 \, 
|i\, \ueta_0 -u_\ell \, \ubeta_1^-|^2 \, (k\, \ubeta_1^- +k' \, \ubeta_1^+) \, {\rm e}^{k \, \ubeta_1^- \, z} \, 
{\rm e}^{k' \, \ubeta_1^+ \, z} \, \dif z \\
& -\int_0^{+\infty} \dfrac{\omega_3}{\gamma_1} \, u_\ell \, |\uceta|^2 \, {\rm e}^{-(k+k') \, \ubeta_3^+ \, z} \, 
|\gamma_1|^2 \, |i\, \ueta_0 -u_\ell \, \ubeta_1^-|^2 \, (k\, \ubeta_1^- +k' \, \ubeta_1^+) \, {\rm e}^{k \, \ubeta_1^- \, z} \, 
{\rm e}^{k' \, \ubeta_1^+ \, z} \, \dif z \\
=\, & -\overline{\gamma_1} \, |i\, \ueta_0 -u_\ell \, \ubeta_1^-|^2 \, \left\{ 
\dfrac{\omega_1}{\ubeta_1^+ -\ubeta_1^-} \, \left( \ubeta_1^+ \, \ubeta_1^- +\dfrac{k'}{k} \, (\ubeta_1^+)^2 \right) 
+\dfrac{(k\, u_\ell \, \ubeta_1^- +k' \, u_\ell \, \ubeta_1^+) \, u_\ell \, |\uceta|^2 \, \omega_3}
{k\, (i\, \ueta_0 -u_\ell \, \ubeta_1^-) +k' \, (i\, \ueta_0 -u_\ell \, \ubeta_1^+)} \right\} \, .
\end{align*}
Adding the contributions of $q_3$ and $q_4$ gives the term
\begin{equation}
\label{prop3-2}
\overline{\gamma_1} \, |i\, \ueta_0 -u_\ell \, \ubeta_1^-|^2 \, \left\{ -\dfrac{1}{2\, \ua_\ell} \, \left( \dfrac{\ua_\ell^2}{c_\ell^2} 
+c_\ell^2 \, |\uceta|^2 \right) \, \omega_1 \, +u_\ell \, |\uceta|^2 \, \omega_3 +\dfrac{k'}{k} \, \omega_1 \, 
\dfrac{|\uceta|^2 -(\ubeta_1^+)^2}{\ubeta_1^+ -\ubeta_1^-} \right\} \, .
\end{equation}

We now add the contributions in \eqref{prop3-1} and \eqref{prop3-2} in order to obtain the $p''(\rho_\ell)$ term in 
$q_2+q_3+q_4$. There is first a constant term that is independent of $(k,k')$, and this term is entirely similar to the 
one derived when adding \eqref{prop2-1} and \eqref{prop2-2} (see Step 1 in the proof of Proposition \ref{prop2}). 
Namely, the constant term in the sum of \eqref{prop3-1} and \eqref{prop3-2} equals
\begin{multline*}
-[\rho] \, [u] \, \underline{\Upsilon} \, \overline{\gamma_1} \, |i\, \ueta_0 -u_\ell \, \ubeta_1^-|^2 \, 
\dfrac{\ueta_0^2 +u_r^2 \, |\uceta|^2}{2\, c_\ell^2 \, (\ueta_0^2 +u_\ell^2 \, |\uceta|^2)} \, 
(u_\ell \, \ua_\ell +i\, c_\ell^2 \, \ueta_0) \, \left( c_r^2 \, \ueta_0^2 +\dfrac{u_r \, \ua_r}{u_\ell \, \ua_\ell} \, 
u_\ell^2 \, c_\ell^2 \, |\uceta|^2 \right) \\
=\dfrac{1}{2} \, [\rho] \, [u] \, \underline{\Upsilon} \, \overline{\gamma_1} \, (-i\, \ueta_0 +u_\ell \, \ubeta_1^+) \, 
(\ueta_0^2 +u_r^2 \, |\uceta|^2) \, \left( c_r^2 \, \ueta_0^2 +\dfrac{u_r \, \ua_r}{u_\ell \, \ua_\ell} \, 
u_\ell^2 \, c_\ell^2 \, |\uceta|^2 \right) =\dfrac{1}{2} \, \overline{Q_\ell} \, .
\end{multline*}
The last contribution in the $p''(\rho_\ell)$ term is the one that depends on $(k,k')$ in \eqref{prop3-2}, that is
\begin{align*}
\dfrac{k'}{k} \, \overline{\gamma_1} \, &|i\, \ueta_0 -u_\ell \, \ubeta_1^-|^2 \, \omega_1 \, 
\dfrac{|\uceta|^2 -(\ubeta_1^+)^2}{\ubeta_1^+ -\ubeta_1^-} \\
&=\dfrac{k'}{k} \, [\rho] \, [u] \, \underline{\Upsilon} \, \overline{\gamma_1} \, |i\, \ueta_0 -u_\ell \, \ubeta_1^-|^2 \, 
\dfrac{\ueta_0^2 +u_r^2 \, |\uceta|^2}{\ueta_0^2 +u_\ell^2 \, |\uceta|^2} \, 
\dfrac{(-u_\ell \, u_r \, \ua_r)}{c_\ell^2} \, (u_\ell \, \ua_\ell -i\, c_\ell^2 \, \ueta_0) \, 
\dfrac{(i\, \ueta_0 -u_\ell \, \ubeta_1^+)^2}{2\, c_\ell^2 \, \ua_\ell} \, (c_\ell^2 -u_\ell^2) \\
&=\dfrac{k'}{2\, k} \, [\rho] \, [u] \, \underline{\Upsilon} \, \overline{\gamma_1} \, (-i\, \ueta_0 +u_\ell \, \ubeta_1^+) \, 
(\ueta_0^2 +u_r^2 \, |\uceta|^2) \, u_\ell \, u_r \, \dfrac{\ua_r}{\ua_\ell} \, 
\dfrac{|i\, \ueta_0 -u_\ell \, \ubeta_1^-|^2}{c_\ell^2} \, (c_\ell^2 -u_\ell^2) =\dfrac{k'}{2\, k} \, \overline{Q_\ell} \, .
\end{align*}
We have thus shown that the $p''(\rho_\ell)$ term in $(q_2+q_3+q_4)(k,k')$ is as claimed in Proposition \ref{prop3}.
\bigskip

$\bullet$ \underline{Step 2: computation of the $p''(\rho_r)$ factor.} The contribution of the kernel $q_2$ equals
\begin{equation}
\label{prop3-3}
-\underline{\Upsilon} \, (D_{d+1} +u_r \, D_{d+2}) \, |\gamma_2|^2 \, |i\, \ueta_0 +u_r \, \ubeta_2^-|^2 
=[\rho] \, [u] \, \underline{\Upsilon} \, \overline{\gamma_2} \, |i\, \ueta_0 +u_r \, \ubeta_2^-|^2 \, 
\dfrac{u_\ell \, \ua_\ell}{c_r^2} \, \Big\{ i \, \ueta_0 \, u_r \, \ua_r -u_r^2 \, c_r^2 \, |\uceta|^2 \Big\} \, .
\end{equation}
The contribution of the kernel $q_3$ equals
\begin{multline*}
i\, (k+k') \, \int_0^{+\infty} \dfrac{\omega_2}{\gamma_2} \, (i \, \uceta^T) \, {\rm e}^{-(k+k') \, \ubeta_2^+ \, z} \, 
|\gamma_2|^2 \, |i\, \ueta_0 +u_r \, \ubeta_2^-|^2 \, \uceta \, {\rm e}^{k \, \ubeta_2^- \, z} \, 
{\rm e}^{k' \, \ubeta_2^+ \, z} \, \dif z \\
=-\overline{\gamma_2} \, |i\, \ueta_0 +u_r \, \ubeta_2^-|^2 \, \omega_2 \, \dfrac{|\uceta|^2}{\ubeta_2^+ -\ubeta_2^-} 
\, \left( 1+\dfrac{k'}{k} \right) \, ,
\end{multline*}
and the contribution of the kernel $q_4$ reads
\begin{multline*}
\int_0^{+\infty} \dfrac{\omega_2}{\gamma_2} \, \ubeta_2^+ \, {\rm e}^{-(k+k') \, \ubeta_2^+ \, z} \, |\gamma_2|^2 \, 
|i\, \ueta_0 +u_r \, \ubeta_2^-|^2 \, (k\, \ubeta_2^-+k' \, \ubeta_2^+) \, {\rm e}^{k \, \ubeta_2^- \, z} \, 
{\rm e}^{k' \, \ubeta_2^+ \, z} \, \dif z \\
=\overline{\gamma_2} \, |i\, \ueta_0 +u_r \, \ubeta_2^-|^2 \, \dfrac{\omega_2}{\ubeta_2^+ -\ubeta_2^-}  \, 
\left( \ubeta_2^+ \, \ubeta_2^- +\dfrac{k'}{k} \, (\ubeta_2^+)^2 \right) \, .
\end{multline*}
Adding the contributions of $q_3$ and $q_4$ gives the term
\begin{equation}
\label{prop3-4}
\overline{\gamma_2} \, |i\, \ueta_0 +u_r \, \ubeta_2^-|^2 \, \left\{ 
-\dfrac{\omega_2}{2\, \ua_r} \, \left( \dfrac{\ua_r^2}{c_r^2} +c_r^2 \, |\uceta|^2 \right) +\dfrac{k'}{k} \, \omega_2 \, 
\dfrac{(\ubeta_2^+)^2 -|\uceta|^2}{\ubeta_2^+ -\ubeta_2^-} \right\} \, .
\end{equation}
When we add the latter term with the expression in \eqref{prop3-3}, we obtain the constant term (independent of 
$(k,k')$):
\begin{equation*}
-[\rho] \, [u] \, \underline{\Upsilon} \, \overline{\gamma_2} \, |i\, \ueta_0 +u_r \, \ubeta_2^-|^2 \, 
(\ueta_0^2 +u_r^2 \, |\uceta|^2) \, \dfrac{i\, u_r \, \ueta_0}{2\, \ua_r} \, (u_\ell \, \ua_\ell +i\, c_\ell^2 \, \ueta_0) 
=\dfrac{1}{2} \, \overline{Q_r} \, .
\end{equation*}
The only term that depends on $(k,k')$ arises in \eqref{prop3-4} and equals
\begin{equation*}
\dfrac{k'}{k} \, \overline{\gamma_2} \, |i\, \ueta_0 +u_r \, \ubeta_2^-|^2 \, \omega_2 \, 
\dfrac{(i\, \ueta_0 +u_r \, \ubeta_2^+)^2}{2\, c_r^2 \, \ua_r} \, (c_r^2 -u_r^2) 
=\dfrac{k'}{2\, k} \, \overline{\gamma_2} \, (i\, \ueta_0 +u_r \, \ubeta_2^+)^2 \, 
\dfrac{\ueta_0^2 +u_r^2 \, |\uceta|^2}{\ua_r} \, \omega_2 =\dfrac{k'}{2\, k} \, \overline{Q_r} \, .
\end{equation*}
The sum of all the terms that involve $p''(\rho_r)$ factorizes as claimed in Proposition \ref{prop3}.
\bigskip

$\bullet$ \underline{Step 3: computation of the remaining terms.} In order to prove Proposition \ref{prop3}, we can 
assume from now on, and without loss of generality that $p''(\rho_\ell) =p''(\rho_r) =0$ in \eqref{hessienne1} and 
\eqref{hessienne2}. With this simplification, we compute
\begin{align}
\sum_{k=1}^{d-1} \uceta_k \, \dif^2 f^k (v_\ell) \cdot (\underline{r}_1^-,\underline{r}_1^+) 
&=-i\, \dfrac{c_\ell^4 \, |\uceta|^2}{\rho_\ell} \, \begin{pmatrix}
0 \\
2\, i \, \uceta \\
-(\ubeta_1^+ +\ubeta_1^-) \end{pmatrix} \, ,\label{prop3-5} \\
\dif^2 \tilde{f}^d (v_\ell) \cdot (\underline{r}_1^-,\underline{r}_1^+) &=\dfrac{c_\ell^4}{\rho_\ell} \, 
\begin{pmatrix}
0 \\
i \, (\ubeta_1^+ +\ubeta_1^-) \, \uceta \\
-2\, \ubeta_1^+ \, \ubeta_1^- \\
-2\, u_\ell \, \ubeta_1^+ \, \ubeta_1^- \end{pmatrix} \, ,\label{prop3-6} \\
\sum_{k=1}^{d-1} \uceta_k \, \dif^2 f^k (v_r) \cdot (\underline{r}_2^-,\underline{r}_2^+) 
&=-i\, \dfrac{c_r^4 \, |\uceta|^2}{\rho_r} \, \begin{pmatrix}
0 \\
2\, i \, \uceta \\
\ubeta_2^+ +\ubeta_2^- \end{pmatrix} \, ,\label{prop3-7} \\
\dif^2 \tilde{f}^d (v_r) \cdot (\underline{r}_2^-,\underline{r}_2^+) &=-\dfrac{c_r^4}{\rho_r} \, 
\begin{pmatrix}
0 \\
i \, (\ubeta_2^+ +\ubeta_2^-) \, \uceta \\
2\, \ubeta_2^+ \, \ubeta_2^- \\
2\, u_r \, \ubeta_2^+ \, \ubeta_2^- \end{pmatrix} \, ,\label{prop3-8}
\end{align}
As was done in Step 3 of the proof of Proposition \ref{prop2}, we are going to split the computations between 
the `left' and `right' contributions. Let us first concentrate on all `left' terms. The `left' contribution of $q_2$ is 
computed from \eqref{prop3-6} and equals
\begin{multline*}
\sigma^* \, \dif^2 \tilde{f}^d (v_\ell) \cdot (\underline{r}_1^-,\underline{r}_1^+) \, |\gamma_1|^2 
=\sigma^* \, \dfrac{c_\ell^4}{\rho_\ell} \, \begin{pmatrix}
0 \\
i \, (\ubeta_1^+ +\ubeta_1^-) \, \uceta \\
-2\, \ubeta_1^+ \, \ubeta_1^- \\
-2\, u_\ell \, \ubeta_1^+ \, \ubeta_1^- \end{pmatrix} \, |\gamma_1|^2 \\
=\dfrac{2\, c_\ell^4}{\rho_\ell \, (c_\ell^2 -u_\ell^2)} \, \underline{\Upsilon}\, \overline{\gamma_1} \, \Big\{ 
u_\ell \, \ueta_0 \, |\uceta|^2 \, \gamma_1 \, \check{D} -(\ueta_0^2 -c_\ell^2 \, |\uceta|^2) \, \gamma_1 
\, (D_{d+1} +u_\ell \, D_{d+2}) \Big\} \, .
\end{multline*}
Using Lemma \ref{lem4}, we find that the `left' contribution of $q_2$ is given by
\begin{equation}
\label{prop3-9}
\dfrac{2\, c_\ell^4}{\rho_\ell \, (c_\ell^2 -u_\ell^2)} \, [\rho] \, [u] \, \underline{\Upsilon}\, \overline{\gamma_1} \, \Big\{ 
i\, c_r^2 \, \ueta_0 \, (\ueta_0^2 -c_\ell^2 \, |\uceta|^2) \, (\ueta_0^2 +u_r^2 \, |\uceta|^2) 
-u_\ell \, u_r \, c_\ell^2 \, |\uceta|^4 \, (u_r \, \ua_r -i\, c_r^2 \, \ueta_0) \Big\} \, .
\end{equation}
Similarly, we use \eqref{prop3-8} to compute the `right' contribution of $q_2$, which yields
\begin{multline}
\label{prop3-10}
-\sigma^* \, \dif^2 \tilde{f}^d (v_\ell) \cdot (\underline{r}_2^-,\underline{r}_2^+) \, |\gamma_2|^2 
=\dfrac{2\, c_r^4}{\rho_r \, (c_r^2 -u_r^2)} \, [\rho] \, [u] \, \underline{\Upsilon} \, \overline{\gamma_2} \\
\times \Big\{ (i\, c_\ell^2 \, \ueta_0^3 +u_\ell \, \ua_\ell \, u_r^2 \, |\uceta|^2) \, (\ueta_0^2 -c_r^2 \, |\uceta|^2) 
-\ueta_0^2 \, u_r^2 \, |\uceta|^2 \, (u_\ell \, \ua_\ell -i\, c_\ell^2 \, \ueta_0) \Big\} \, .
\end{multline}

Using \eqref{prop3-5}, the `left' contribution of $q_3$ equals
\begin{align}
& i\, (k+k') \, \int_0^{+\infty} \dfrac{\omega_1}{\gamma_1} \, \widetilde{\ell}_1 \, {\rm e}^{-(k+k') \, \ubeta_1^+ \, z} \, 
\dfrac{-i\, c_\ell^4 \, |\uceta|^2}{\rho_\ell} \, \begin{pmatrix}
0 \\
2\, i \, \uceta \\
-(\ubeta_1^+ +\ubeta_1^-) \end{pmatrix} \, |\gamma_1|^2 \, {\rm e}^{k \, \ubeta_1^- \, z} \, 
{\rm e}^{k' \, \ubeta_1^+ \, z} \, \dif z \notag \\
+\, & i\, (k+k') \, \int_0^{+\infty} \dfrac{\omega_3}{\gamma_1} \, \widetilde{\ell}_3 \, {\rm e}^{-(k+k') \, \ubeta_3^+ \, z} \, 
\dfrac{-i\, c_\ell^4 \, |\uceta|^2}{\rho_\ell} \, \begin{pmatrix}
0 \\
2\, i \, \uceta \\
-(\ubeta_1^+ +\ubeta_1^-) \end{pmatrix} \, |\gamma_1|^2 \, {\rm e}^{k \, \ubeta_1^- \, z} \, 
{\rm e}^{k' \, \ubeta_1^+ \, z} \, \dif z \notag \\
=\, & \dfrac{c_\ell^4}{\rho_\ell} \, \overline{\gamma_1} \, \omega_1 \, \widetilde{\ell}_1 \, \begin{pmatrix}
0 \\
2\, i \, \uceta \\
-(\ubeta_1^+ +\ubeta_1^-) \end{pmatrix} \, \dfrac{|\uceta|^2}{\ubeta_1^+ -\ubeta_1^-} \, \left( 1+\dfrac{k'}{k} \right) \notag \\
&+\underbrace{\dfrac{c_\ell^4}{\rho_\ell} \, \overline{\gamma_1} \, \omega_3 \, \widetilde{\ell}_1 \, \begin{pmatrix}
0 \\
2\, i \, \uceta \\
-(\ubeta_1^+ +\ubeta_1^-) \end{pmatrix} \, 
\dfrac{(k+k') \, |\uceta|^2}{k\, (\ubeta_3^+ -\ubeta_1^-) +k'\, (\ubeta_3^+ -\ubeta_1^+)}}_{\diamond} \, ,\label{prop3-11}
\end{align}
and, similarly (using now \eqref{prop3-7} rather than \eqref{prop3-5}), the `right' contribution of $q_3$ equals
\begin{multline}
\label{prop3-12}
i\, (k+k') \, \int_0^{+\infty} \dfrac{\omega_2}{\gamma_2} \, \widetilde{\ell}_2 \, {\rm e}^{-(k+k') \, \ubeta_2^+ \, z} \, 
\dfrac{-i\, c_r^4 \, |\uceta|^2}{\rho_r} \, \begin{pmatrix}
0 \\
2\, i \, \uceta \\
\ubeta_2^+ +\ubeta_2^- \end{pmatrix} \, |\gamma_2|^2 \, {\rm e}^{k \, \ubeta_2^- \, z} \, 
{\rm e}^{k' \, \ubeta_2^+ \, z} \, \dif z \\
=\dfrac{c_r^4}{\rho_r} \, \overline{\gamma_2} \, \omega_2 \, \widetilde{\ell}_2 \, \begin{pmatrix}
0 \\
2\, i \, \uceta \\
\ubeta_2^+ +\ubeta_2^- \end{pmatrix} \, \dfrac{|\uceta|^2}{\ubeta_2^+ -\ubeta_2^-} \, \left( 1+\dfrac{k'}{k} \right) \, .
\end{multline}

The `left' contribution of $q_4$ is computed by retaining the three first rows in \eqref{prop3-6}:
\begin{align}
& -\int_0^{+\infty} \dfrac{\omega_1}{\gamma_1} \, \widetilde{\ell}_1 \, {\rm e}^{-(k+k') \, \ubeta_1^+ \, z} \, 
\dfrac{c_\ell^4}{\rho_\ell} \, \begin{pmatrix}
0 \\
i \, (\ubeta_1^+ +\ubeta_1^-) \, \uceta \\
-2\, \ubeta_1^+ \, \ubeta_1^- \end{pmatrix} \, |\gamma_1|^2 \, (k\, \ubeta_1^- +k' \, \ubeta_1^+)\, 
{\rm e}^{k \, \ubeta_1^- \, z} \, {\rm e}^{k' \, \ubeta_1^+ \, z} \, \dif z \notag \\
& -\int_0^{+\infty} \dfrac{\omega_3}{\gamma_1} \, \widetilde{\ell}_1 \, {\rm e}^{-(k+k') \, \ubeta_3^+ \, z} \, 
\dfrac{c_\ell^4}{\rho_\ell} \, \begin{pmatrix}
0 \\
i \, (\ubeta_1^+ +\ubeta_1^-) \, \uceta \\
-2\, \ubeta_1^+ \, \ubeta_1^- \end{pmatrix} \, |\gamma_1|^2 \, (k\, \ubeta_1^- +k' \, \ubeta_1^+)\, 
{\rm e}^{k \, \ubeta_1^- \, z} \, {\rm e}^{k' \, \ubeta_1^+ \, z} \, \dif z \notag \\
=\, & -\dfrac{c_\ell^4}{\rho_\ell} \, \overline{\gamma_1} \, \omega_1 \, \widetilde{\ell}_1 \, \begin{pmatrix}
0 \\
i \, (\ubeta_1^+ +\ubeta_1^-) \, \uceta \\
-2\, \ubeta_1^+ \, \ubeta_1^- \end{pmatrix} \, \dfrac{1}{\ubeta_1^+ -\ubeta_1^-} \, 
\left( \ubeta_1^- +\dfrac{k'}{k} \, \ubeta_1^+ \right) \notag \\
& -\underbrace{\dfrac{c_\ell^4}{\rho_\ell} \, \overline{\gamma_1} \, \omega_3 \, \widetilde{\ell}_1 \, \begin{pmatrix}
0 \\
i \, (\ubeta_1^+ +\ubeta_1^-) \, \uceta \\
-2\, \ubeta_1^+ \, \ubeta_1^- \end{pmatrix} \, 
\dfrac{k\, \ubeta_1^- +k' \, \ubeta_1^+}{k\, (\ubeta_3^+ -\ubeta_1^-) +k'\, (\ubeta_3^+ -\ubeta_1^+)}}_{\diamond} 
\, ,\label{prop3-13}
\end{align}
and the `right' contribution of $q_4$ equals
\begin{multline}
\label{prop3-14}
-\dfrac{c_r^4}{\rho_r} \, \overline{\gamma_2} \, \omega_2 \, \widetilde{\ell}_2 \, \begin{pmatrix}
0 \\
i \, (\ubeta_2^+ +\ubeta_2^-) \, \uceta \\
2\, \ubeta_2^+ \, \ubeta_2^- \end{pmatrix} \, \dfrac{1}{\ubeta_2^+ -\ubeta_2^-} \, 
\left( \ubeta_2^- +\dfrac{k'}{k} \, \ubeta_2^+ \right) \\
=\dfrac{c_r^4}{\rho_r} \, \overline{\gamma_2} \, \omega_2 \, \widetilde{\ell}_2 \, \begin{pmatrix}
0 \\
i (\ubeta_2^+ +\ubeta_2^-) \uceta \\
2\, \ubeta_2^+ \, \ubeta_2^- \end{pmatrix} 
-\dfrac{c_r^4}{\rho_r} \, \overline{\gamma_2} \, \omega_2 \, \widetilde{\ell}_2 \, \begin{pmatrix}
0 \\
i (\ubeta_2^+ +\ubeta_2^-) \uceta \\
2\, \ubeta_2^+ \, \ubeta_2^- \end{pmatrix} \, \dfrac{\ubeta_2^+}{\ubeta_2^+ -\ubeta_2^-} \, 
\left( 1+\dfrac{k'}{k} \right) \, .
\end{multline}

The `left' contribution of $q_3+q_4$ is computed as follows. We first observe that the difference between the 
diamond term in \eqref{prop3-11} and the diamond term in \eqref{prop3-13} reads
\begin{equation*}
\dfrac{c_\ell^2}{\rho_\ell} \, \overline{\gamma_1} \, \omega_3 \, u_\ell \, |\uceta|^2 \, \Big\{ 
c_\ell^2 \, |\uceta|^2 -c_\ell^2 \, \ubeta_1^+ \, \ubeta_1^- -(i\, \ueta_0 -u_\ell \, \ubeta_1^+) \, 
(i\, \ueta_0 -u_\ell \, \ubeta_1^-) \Big\} 
=\dfrac{2\, c_\ell^6}{\rho_\ell \, (c_\ell^2 -u_\ell^2)} \, u_\ell \, |\uceta|^4 \, \overline{\gamma_1} \, \omega_3 \, .
\end{equation*}
Consequently the `left' contribution of $q_3+q_4$, which corresponds to the sum of \eqref{prop3-11} and 
\eqref{prop3-13}, equals
\begin{align*}
&\dfrac{c_\ell^4 \, \overline{\gamma_1} \, \omega_1}{\rho_\ell \, (\ubeta_1^+ -\ubeta_1^-)} \, \left\{ 
|\uceta|^2 \, \widetilde{\ell}_1 \, \begin{pmatrix}
0 \\
2\, i \, \uceta \\
-(\ubeta_1^+ +\ubeta_1^-) \end{pmatrix} -\ubeta_1^+ \, \widetilde{\ell}_1 \, \begin{pmatrix}
0 \\
i \, (\ubeta_1^+ +\ubeta_1^-) \, \uceta \\
-2\, \ubeta_1^+ \, \ubeta_1^- \end{pmatrix} \right\} \, \left( 1+\dfrac{k'}{k} \right) \\
&+\dfrac{c_\ell^4}{\rho_\ell} \, \overline{\gamma_1} \, \omega_1 \, \widetilde{\ell}_1 \, \begin{pmatrix}
0 \\
i \, (\ubeta_1^+ +\ubeta_1^-) \, \uceta \\
-2\, \ubeta_1^+ \, \ubeta_1^- \end{pmatrix} +\dfrac{2\, c_\ell^6}{\rho_\ell \, (c_\ell^2 -u_\ell^2)} 
\, u_\ell \, |\uceta|^4 \, \overline{\gamma_1} \, \omega_3 \\
=& \dfrac{2\, c_\ell^4}{\rho_\ell \, (\ubeta_1^+ -\ubeta_1^-)} \, \overline{\gamma_1} \, \omega_1 \, 
(\ubeta_1^+ \, \ubeta_1^- -|\uceta|^2) \, ((\ubeta_1^+)^2 -|\uceta|^2) \, \left( 1+\dfrac{k'}{k} \right) \\
&-\dfrac{2\, c_\ell^4}{\rho_\ell \, (c_\ell^2 -u_\ell^2)} \, \overline{\gamma_1} \, \omega_1 \, \Big\{ 
(\ueta_0^2 -c_\ell^2 \, |\uceta|^2) \, \ubeta_1^+ +i\, u_\ell \, \ueta_0 \,  |\uceta|^2 \Big\} 
+\dfrac{2\, c_\ell^6}{\rho_\ell \, (c_\ell^2 -u_\ell^2)} \, u_\ell \, |\uceta|^4 \, \overline{\gamma_1} \, \omega_3 \, .
\end{align*}
After a little bit of algebra, the `left' contribution of $q_3+q_4$ is found to be equal to
\begin{align*}
&-\dfrac{c_\ell^2}{\rho_\ell} \, [\rho] \, [u] \, \underline{\Upsilon} \, u_\ell \, u_r \, \dfrac{\ua_r}{\ua_\ell} \, 
(\ueta_0^2 +u_r^2 \, |\uceta|^2) \, \left( \dfrac{\ua_\ell^2}{c_\ell^2} +c_\ell^2 \, |\uceta|^2 \right) \, 
\overline{\gamma_1} \, (i\, \ueta_0 -u_\ell \, \ubeta_1^+) \, \left( 1+\dfrac{k'}{k} \right) \\
&+\dfrac{2\, c_\ell^4}{\rho_\ell \, (c_\ell^2 -u_\ell^2)} \, [\rho] \, [u] \, \underline{\Upsilon} \, \overline{\gamma_1} \, 
\Big\{ -i\, c_r^2 \, \ueta_0 \, (\ueta_0^2 -c_\ell^2 \, |\uceta|^2) \, 
(\ueta_0^2 +u_r^2 \, |\uceta|^2) +u_\ell \, u_r \, c_\ell^2 \, |\uceta|^4 \, (u_r \, \ua_r -i\, c_r^2 \, \ueta_0) \Big\} \, .
\end{align*}
When combined with \eqref{prop3-9}, we have thus shown that the `left' contribution of the kernel $q_2+q_3+q_4$ 
reads
\begin{multline}
\label{prop3-15}
-\dfrac{c_\ell^2}{\rho_\ell} \, [\rho] \, [u] \, \underline{\Upsilon} \, u_\ell \, u_r \, \dfrac{\ua_r}{\ua_\ell} \, 
(\ueta_0^2 +u_r^2 \, |\uceta|^2) \, \left( \dfrac{\ua_\ell^2}{c_\ell^2} +c_\ell^2 \, |\uceta|^2 \right) \, 
\overline{\gamma_1} \, (i\, \ueta_0 -u_\ell \, \ubeta_1^+) \, \left( 1+\dfrac{k'}{k} \right) \\
=-\left( \dfrac{c_\ell^2}{\rho_\ell} \, \overline{Q_\ell} +2\, [\rho] \, [u] \, \underline{\Upsilon} \, u_\ell \, u_r \, |\uceta|^2 
\, (\ueta_0^2 +u_r^2 \, |\uceta|^2) \, \dfrac{c_\ell^4 \, \ua_r}{\rho_\ell \, \ua_\ell} \, \overline{\gamma_1} \, 
(i\, \ueta_0 -u_\ell \, \ubeta_1^+) \right) \, \left( 1+\dfrac{k'}{k} \right) \, .
\end{multline}
\bigskip

Let us now compute the `right' contribution of the kernel $q_3+q_4$. We add the expressions in \eqref{prop3-12} 
and \eqref{prop3-14}, which gives
\begin{equation*}
\dfrac{2\, c_r^4}{\rho_r} \, \overline{\gamma_2} \, \omega_2 \, \left\{ (\ubeta_2^+ \, \ubeta_2^-) \, \ubeta_2^+ 
-\dfrac{\ubeta_2^+ +\ubeta_2^-}{2} \, |\uceta|^2 \right\} 
-\dfrac{2\, c_r^4}{\rho_r} \, \overline{\gamma_2} \, \omega_2 \, ((\ubeta_2^+)^2 -|\uceta|^2) \, 
\dfrac{\ubeta_2^+ \, \ubeta_2^- -|\uceta|^2}{\ubeta_2^+ -\ubeta_2^-} \, \left( 1+\dfrac{k'}{k} \right) \, ,
\end{equation*}
or equivalently
\begin{multline}
\label{prop3-16}
\dfrac{2\, c_r^4}{\rho_r \, (c_r^2 -u_r^2)} \, [\rho] \, [u] \, \underline{\Upsilon} \, \overline{\gamma_2} \, i \, u_r 
\, \ueta_0 \, (u_\ell \, \ua_\ell -i\, c_\ell^2 \, \ueta_0) \, \Big\{ (\ueta_0^2 -c_r^2 \, |\uceta|^2) \, \ubeta_2^+ 
-i \, u_r \, \ueta_0 \, |\uceta|^2 \Big\} \\
-\dfrac{c_r^4}{\rho_r} \, [\rho] \, [u] \, \underline{\Upsilon} \, u_\ell \, u_r \, \dfrac{\ua_\ell}{\ua_r} \, 
(\ueta_0^2 +u_r^2 \, |\uceta|^2) \, \left( \dfrac{\ua_r^2}{c_r^2} +c_r^2 \, |\uceta|^2 \right) \, 
\overline{\gamma_2} \, (i\, \ueta_0 +u_r \, \ubeta_2^+) \, \left( 1+\dfrac{k'}{k} \right) \, .
\end{multline}
The first row in \eqref{prop3-16} is exactly the opposite of \eqref{prop3-10}, that is of the `right' contribution 
of $q_2$. In other words, we have found that the `right' contribution of $q_2+q_3+q_4$ is given by the second 
row in \eqref{prop3-16}, which reads
\begin{equation*}
-\left( \dfrac{c_r^2}{\rho_r} \, \overline{Q_r} +2\, [\rho] \, [u] \, \underline{\Upsilon} \, u_\ell \, u_r \, |\uceta|^2 
\, (\ueta_0^2 +u_r^2 \, |\uceta|^2) \, \dfrac{c_r^4 \, \ua_\ell}{\rho_r \, \ua_r} \, \overline{\gamma_2} \, 
(i\, \ueta_0 +u_r \, \ubeta_2^+) \right) \, \left( 1+\dfrac{k'}{k} \right) \, .
\end{equation*}
The kernel $q_2+q_3+q_4$ is the sum of the latter quantity and the right hand side of \eqref{prop3-15}. Collecting 
the terms, this completes the proof of Proposition \ref{prop3}.
\end{proof}

\begin{corollary}
\label{cor2}
With $Q$ defined in \eqref{defQ}, $Q_\ell,Q_r,Q_\sharp,Q_\flat$ defined in Propositions \ref{prop2} and \ref{prop3}, 
the kernel $q$ satisfies
\begin{equation*}
q(k,k') =\begin{cases}
\left( \dfrac{p''(\rho_\ell)}{2} +\dfrac{c_\ell^2}{\rho_\ell} \right) \, Q_\ell 
+\left( \dfrac{p''(\rho_r)}{2} +\dfrac{c_r^2}{\rho_r} \right) \, Q_r +Q_\sharp &\text{\rm if $k>0$ and $k'>0$,}\\
\left\{ \! \left( \dfrac{p''(\rho_\ell)}{2} -\dfrac{c_\ell^2}{\rho_\ell} \right) \, \overline{Q_\ell} 
+\left( \dfrac{p''(\rho_r)}{2} -\dfrac{c_r^2}{\rho_r} \right) \, \overline{Q_r} +Q_\flat +\overline{Q} \right\} 
\left( 1+\dfrac{k'}{k} \right) &\text{\rm if $k>0, k'<0, k+k'>0$.}
\end{cases}
\end{equation*}
\end{corollary}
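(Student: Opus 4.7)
The plan is almost immediate from the work already done. Since the kernel $q$ decomposes as $q = q_1 + q_2 + q_3 + q_4 + q_5$, I would simply add the two groupings that have already been computed in the two regions of interest: on the one hand, Corollary \ref{cor1} provides $(q_1+q_5)(k,k')$; on the other, Propositions \ref{prop2} and \ref{prop3} provide $(q_2+q_3+q_4)(k,k')$. These cover all five summands and all points $(k,k')$ in the two stated regions.

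First, in the region $\{k>0, k'>0\}$, Corollary \ref{cor1} yields $(q_1+q_5)(k,k')=0$, so the entire contribution comes from Proposition \ref{prop2}. Adding gives exactly
\begin{equation*}
q(k,k') =\left( \dfrac{p''(\rho_\ell)}{2} +\dfrac{c_\ell^2}{\rho_\ell} \right) \, Q_\ell
+\left( \dfrac{p''(\rho_r)}{2} +\dfrac{c_r^2}{\rho_r} \right) \, Q_r +Q_\sharp \, ,
\end{equation*}
which is the first case of the Corollary.

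Next, in the region $\{k>0,\, k'<0,\, k+k'>0\}$, Corollary \ref{cor1} gives $(q_1+q_5)(k,k') = \overline{Q}\,(1+k'/k)$, while Proposition \ref{prop3} gives $(q_2+q_3+q_4)(k,k')$ as a common factor $(1+k'/k)$ times the bracketed quantity with $\overline{Q_\ell},\overline{Q_r}$ and $Q_\flat$. Since both expressions share the same prefactor $(1+k'/k)$, one may simply pull it out and sum the brackets, producing the displayed second case. No further algebra is required; the single point to verify is that the prefactor $(1+k'/k)$ is indeed common to both ingredients, which is visible from the statements of Corollary \ref{cor1} and Proposition \ref{prop3}.

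There is no real obstacle here: the heavy algebraic work was absorbed into Lemmas \ref{lem1}, \ref{lem3} and Propositions \ref{prop2}, \ref{prop3}, and the Corollary is a one-line bookkeeping consequence of their statements.
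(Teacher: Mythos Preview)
Your proposal is correct and matches the paper's approach: Corollary~\ref{cor2} is stated without proof in the paper precisely because it is the immediate bookkeeping consequence of adding the contributions from Corollary~\ref{cor1} and Propositions~\ref{prop2}--\ref{prop3}, exactly as you describe.
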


\section{Conclusion}

There remains a final simplification in order to achieve the final form of the kernel $q$. Our main result reads 
as follows.

\begin{proposition}
\label{prop4}
With $Q_\ell,Q_r,Q_\sharp$ defined in Propositions \ref{prop2}, let us define
\begin{equation*}
Q_\natural := \left( \dfrac{p''(\rho_\ell)}{2} +\dfrac{c_\ell^2}{\rho_\ell} \right) \, Q_\ell 
+\left( \dfrac{p''(\rho_r)}{2} +\dfrac{c_r^2}{\rho_r} \right) \, Q_r +Q_\sharp \, .
\end{equation*}
Then the kernel $q=4\, \pi \, a_1$ satisfies
\begin{equation*}
q(k,k') =\begin{cases}
Q_\natural &\text{\rm if $k>0$ and $k'>0$,}\\
\overline{Q_\natural} \, \left( 1+\dfrac{k'}{k} \right) &\text{\rm if $k>0$, $k'<0$ and $k+k'>0$.}
\end{cases}
\end{equation*}
In particular, $q$ satisfies Hunter's stability condition $q(1,0^+) =\overline{q(1,0^-)}$, and \eqref{BurgersEuler} 
is locally well-posed in $H^2(\R)$.
\end{proposition}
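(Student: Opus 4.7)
The plan is to reduce Proposition \ref{prop4} to a single complex algebraic identity relating the six building blocks $Q,Q_\ell,Q_r,Q_\sharp,Q_\flat$ introduced in \eqref{defQ}, \eqref{defQlr} and Proposition \ref{prop3}. The first branch of the piecewise formula is nothing but the definition of $Q_\natural$, so the case $k>0$, $k'>0$ requires no work beyond invoking Corollary \ref{cor2}. All the content is in the branch $k>0$, $k'<0$, $k+k'>0$.

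For that branch, I would first take complex conjugates of $Q_\natural$ and compare the expression produced by Corollary \ref{cor2} to the desired value $\overline{Q_\natural}\,(1+k'/k)$. After cancelling the common factor $1+k'/k$ (which is nonzero on the region under consideration and carries no hidden information), the claim reduces to the scalar identity
\begin{equation*}
Q_\flat +\overline{Q} -\overline{Q_\sharp}
=\dfrac{2\, c_\ell^2}{\rho_\ell} \, \overline{Q_\ell} +\dfrac{2\, c_r^2}{\rho_r} \, \overline{Q_r} \, .
\end{equation*}
This is the identity I would actually verify, and it is where essentially all of the final algebra sits.

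To prove the identity, I would substitute the explicit expressions \eqref{defQ}, \eqref{defQlr} together with the definition of $Q_\flat$, factor out the common multiplicative constant $[\rho]\,[u]\,\underline{\Upsilon}\,(\ueta_0^2 +u_r^2 |\uceta|^2)$ that appears in every term, and then systematically use Lemma \ref{lem4} together with the three key simplifications already exploited in the proofs of Propositions \ref{prop1}--\ref{prop3}: the root condition $u_\ell u_r \ua_\ell \ua_r +c_\ell^2 c_r^2 \ueta_0^2 =0$, the quadratic relations $c_\ell^2 (\ubeta_1^\pm)^2 = c_\ell^2|\uceta|^2 +(i\ueta_0 \mp u_\ell \ubeta_1^\pm)^2$ and its $r$ counterpart, and the duality $\gamma_2/\gamma_1 = i c_\ell^2 \ueta_0^2/(u_r \ua_r)$ obtained from \eqref{defgamma}. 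The main obstacle will be purely mechanical: matching the $c_\ell^2/\rho_\ell$ and $c_r^2/\rho_r$ terms on the right against the corresponding pieces of $Q_\flat$, then checking that the leftover $\overline{Q} -\overline{Q_\sharp}$ vanishes using the root condition. Because $Q_\sharp$ carries the factor $(\ueta_0^2 +u_\ell u_r|\uceta|^2)$ while $Q$ carries $i u_\ell u_r \ua_\ell \ua_r$, the cancellation is anticipated from the identity $u_\ell u_r \ua_\ell \ua_r = -c_\ell^2 c_r^2 \ueta_0^2$.

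Once the piecewise formula is established, Hunter's stability condition $q(1,0^+)=\overline{q(1,0^-)}$ is read off by sending $k'\to 0^\pm$ with $k=1$: the two one-sided limits are $Q_\natural$ and $\overline{Q_\natural}$ respectively, which are conjugates by inspection. Local well-posedness of the Cauchy problem associated with \eqref{BurgersEuler} in $H^2(\R)$ then follows from the general result of \cite{Benzoni2009}, whose hypotheses (algebraic symmetry of the kernel, invariance under multiplication by a real scalar, and Hunter's condition) are all satisfied by $4\,\pi\,a_1=q$ in view of Proposition \ref{prop1} (reality of $\alpha_0$) and the formula just proved.
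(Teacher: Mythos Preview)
Your reduction is correct and coincides with the paper's: after invoking Corollary~\ref{cor2}, the branch $k>0,k'>0$ is immediate, and the branch $k>0,k'<0,k+k'>0$ reduces (after conjugating and cancelling $1+k'/k$) to exactly the scalar identity the paper verifies, namely
\[
\frac{c_\ell^2}{\rho_\ell}\,Q_\ell+\frac{c_r^2}{\rho_r}\,Q_r+\frac{1}{2}\,Q_\sharp
=\frac{1}{2}\,\big(Q+\overline{Q_\flat}\big)\,,
\]
and your list of tools (the root condition, the quadratic relations for $\ubeta_{1,2}^\pm$, the expressions \eqref{defgamma}) is the right one. The final paragraph on Hunter's condition and well-posedness is also fine.

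Where you go wrong is the anticipated \emph{structure} of the cancellation. You expect $Q_\flat$ by itself to match $\dfrac{2c_\ell^2}{\rho_\ell}\overline{Q_\ell}+\dfrac{2c_r^2}{\rho_r}\overline{Q_r}$, leaving a separate residual $\overline{Q}-\overline{Q_\sharp}$ that should vanish. Neither piece holds on its own. Comparing the $\ell$-parts directly, $Q_\flat$ carries the factor $c_\ell^2\,|\uceta|^2$ where $\dfrac{2c_\ell^2}{\rho_\ell}\overline{Q_\ell}$ carries $(\ueta_0^2+u_\ell^2\,|\uceta|^2)$; the discrepancy is $-\ua_\ell^2/c_\ell^2$, which does not vanish (and likewise on the $r$-side), so $Q\ne Q_\sharp$ must absorb these leftover $\ua_{\ell,r}^2$ terms. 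Your heuristic that the root condition $u_\ell u_r\ua_\ell\ua_r=-c_\ell^2 c_r^2\ueta_0^2$ links the factor $i\,u_\ell u_r\ua_\ell\ua_r$ in $Q$ to the factor $(\ueta_0^2+u_\ell u_r|\uceta|^2)$ in $Q_\sharp$ does not actually yield $Q=Q_\sharp$. The paper therefore does \emph{not} attempt any two-by-two matching: it factors out $[\rho]\,[u]\,\underline{\Upsilon}\,(\ueta_0^2+u_r^2|\uceta|^2)\,u_\ell u_r$, rewrites the remaining relation as $B_\ell+B_r=0$ where each of $B_\ell,B_r$ mixes contributions from all five quantities, computes $u_\ell(u_r\ua_\ell-i c_\ell^2\ueta_0)B_\ell$ and $u_r(u_\ell\ua_r-i c_r^2\ueta_0)B_r$ separately (relations \eqref{theo3}--\eqref{theo4}), and only at the very last step recovers the root condition from their sum. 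If you simply carry out your substitution plan without insisting on the premature split, you will arrive at the same endpoint; but the split you propose would fail if taken literally.
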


\begin{proof}
In view of the expression of $q$ given in Corollary \ref{cor2}, we only need to show the relation
\begin{equation*}
\dfrac{c_\ell^2}{\rho_\ell} \, Q_\ell +\dfrac{c_r^2}{\rho_r} \, Q_r +\dfrac{1}{2} \, Q_\sharp 
=\dfrac{1}{2} \, \Big( Q +\overline{Q_\flat} \Big) \, ,
\end{equation*}
with $Q_\ell,Q_r,Q_\sharp$ given in Proposition \ref{prop2}, $Q_\flat$ in Proposition \ref{prop3}, and $Q$ in 
Lemma \ref{lem1}. 
Simplifying by the common factor $[\rho] \, [u] \, \underline{\Upsilon} \, (\ueta_0^2 +u_r^2 \, |\uceta|^2) \, 
u_\ell \, u_r$, we are led to showing that the following relation holds:
\begin{multline}
\label{theo1}
\dfrac{\ua_r}{\rho_\ell \, \ua_\ell} \, c_\ell^2 \, (\ueta_0^2 +u_\ell^2 \, |\uceta|^2) \, \gamma_1 \, 
(i\, \ueta_0 -u_\ell \, \ubeta_1^-) +\dfrac{\ua_\ell}{\rho_r\, \ua_r} \, c_r^2 \, (\ueta_0^2 +u_r^2 \, |\uceta|^2) 
\, \gamma_2 \, (i\, \ueta_0 +u_r \, \ubeta_2^-) \\
-\dfrac{1}{{\bf j} \, [u] \, \ueta_0} \, (\ueta_0^2 +u_\ell \, u_r \, |\uceta|^2) \, i\, \ua_\ell \, \ua_r \, 
(c_r^2 \, \gamma_2 -c_\ell^2 \, \gamma_1) =(\ubeta_1^- +\ubeta_2^-) \, i\, \ua_\ell \, \ua_r \, 
\dfrac{u_\ell \, \ua_r +i\, c_r^2 \, \ueta_0}{u_\ell \, \ua_r -i\, c_r^2 \, \ueta_0} \\
+\dfrac{\ua_r}{\rho_\ell \, \ua_\ell} \, c_\ell^4 \, |\uceta|^2 \, \gamma_1 \, (i\, \ueta_0 -u_\ell \, \ubeta_1^-) 
+\dfrac{\ua_\ell}{\rho_r \, \ua_r} \, c_r^4 \, |\uceta|^2 \, \gamma_2 \, (i\, \ueta_0 +u_r \, \ubeta_2^-) \, ,
\end{multline}
where we use the notation ${\bf j} := \rho_\ell \, u_\ell =\rho_r \, u_r$ to denote the mass flux across the phase 
boundary.

The verification of \eqref{theo1} proceeds as follows. We first combine the first and third row in \eqref{theo1} 
by recalling the definitions of $\ua_\ell,\ua_r$, see \eqref{valeurspropres1} and \eqref{valeurspropres2}. Thus 
verifying \eqref{theo1} amounts to showing
\begin{multline}
\label{theo2}
(\ubeta_1^- +\ubeta_2^-) \, \dfrac{u_\ell \, \ua_r +i\, c_r^2 \, \ueta_0}{u_\ell \, \ua_r -i\, c_r^2 \, \ueta_0} 
-i\, \dfrac{\gamma_1}{\rho_\ell} \, (i\, \ueta_0 -u_\ell \, \ubeta_1^-) 
-i\, \dfrac{\gamma_2}{\rho_r} \, (i\, \ueta_0 +u_\ell \, \ubeta_2^-) \\
+\dfrac{1}{{\bf j} \, [u] \, \ueta_0} \, (\ueta_0^2 +u_\ell \, u_r \, |\uceta|^2) \, 
(c_r^2 \, \gamma_2 -c_\ell^2 \, \gamma_1) =0 \, .
\end{multline}
Let us now define
\begin{align*}
&B_\ell :=\ubeta_1^- \, \dfrac{u_\ell \, \ua_r +i\, c_r^2 \, \ueta_0}{u_\ell \, \ua_r -i\, c_r^2 \, \ueta_0} 
-i\, \dfrac{\gamma_1}{\rho_\ell} \, (i\, \ueta_0 -u_\ell \, \ubeta_1^-) -\dfrac{1}{{\bf j} \, [u] \, \ueta_0} \, 
(\ueta_0^2 +u_\ell \, u_r \, |\uceta|^2) \, c_\ell^2 \, \gamma_1 \, ,\\
&B_r :=\ubeta_2^- \, \dfrac{u_\ell \, \ua_r +i\, c_r^2 \, \ueta_0}{u_\ell \, \ua_r -i\, c_r^2 \, \ueta_0} 
-i\, \dfrac{\gamma_2}{\rho_r} \, (i\, \ueta_0 +u_\ell \, \ubeta_2^-) +\dfrac{1}{{\bf j} \, [u] \, \ueta_0} \, 
(\ueta_0^2 +u_\ell \, u_r \, |\uceta|^2) \, c_r^2 \, \gamma_2 \, ,
\end{align*}
so that \eqref{theo2}, which is the relation we wish to prove, reads $B_\ell +B_r=0$.

Using the relations
\begin{equation*}
\dfrac{u_\ell \, \ua_r +i\, c_r^2 \, \ueta_0}{u_\ell \, \ua_r -i\, c_r^2 \, \ueta_0} 
=-\dfrac{u_r \, \ua_\ell +i\, c_\ell^2 \, \ueta_0}{u_r \, \ua_\ell -i\, c_\ell^2 \, \ueta_0} \, ,\quad 
(u_r \, \ua_\ell -i\, c_\ell^2 \, \ueta_0) \, \gamma_1 =-\rho_\ell \, [u] \, \ueta_0 \, ,
\end{equation*}
we get
\begin{equation}
\label{theo3}
u_\ell \, (u_r \, \ua_\ell -i\, c_\ell^2 \, \ueta_0) \, B_\ell =-i\, \ueta_0 \, (u_\ell \, \ua_\ell +i\, c_\ell^2 \, \ueta_0) \, .
\end{equation}
Similarly, we compute
\begin{equation}
\label{theo4}
u_r \, (u_\ell \, \ua_r -i\, c_r^2 \, \ueta_0) \, B_r =-i\, \ueta_0 \, (u_r \, \ua_r +i\, c_r^2 \, \ueta_0) \, .
\end{equation}
Combining the relations \eqref{theo3} and \eqref{theo4}, we get
\begin{align*}
u_\ell \, u_r \, &(u_r \, \ua_\ell -i\, c_\ell^2 \, \ueta_0) \, (u_\ell \, \ua_r -i\, c_r^2 \, \ueta_0) \, (B_\ell +B_r) \\
&=-i\, \ueta_0 \, u_r \, (u_\ell \, \ua_r -i\, c_r^2 \, \ueta_0) \, (u_\ell \, \ua_\ell +i\, c_\ell^2 \, \ueta_0) 
-i\, \ueta_0 \, u_\ell \, (u_r \, \ua_\ell -i\, c_\ell^2 \, \ueta_0) \, (u_r \, \ua_r +i\, c_r^2 \, \ueta_0) \\
&=-i\, \ueta_0 \, (u_\ell+u_r) \, (u_\ell \, u_r \, \ua_\ell \, \ua_r +c_\ell^2 \, c_r^2 \, \ueta_0^2) =0 \, .
\end{align*}
This means that \eqref{theo2} holds, and consequently the expression of the kernel $q$ is as claimed 
in Proposition \ref{prop4}. The verification of Hunter's stability condition $q(1,0^+) =\overline{q(1,0^-)}$ is then straightforward, and local well-posedness in $H^2(\R)$ follows from the main result in \cite{Benzoni2009}.
\end{proof}

\bibliographystyle{plain}
\bibliography{BC2}

\begin{thebibliography}{1}

\bibitem{Benzoni1998}
S.~Benzoni-Gavage.
\newblock Stability of multi-dimensional phase transitions in a van der {W}aals
  fluid.
\newblock {\em Nonlinear Anal.}, 31(1-2):243--263, 1998.

\bibitem{Benzoni2009}
S.~Benzoni-Gavage.
\newblock Local well-posedness of nonlocal {B}urgers equations.
\newblock {\em Differential Integral Equations}, 22(3-4):303--320, 2009.

\bibitem{BenzoniRosini}
S.~Benzoni-Gavage and M.~Rosini.
\newblock Weakly nonlinear surface waves and subsonic phase boundaries.
\newblock {\em Comput. Math. Appl.}, 57(3-4):1463--1484, 2009.

\bibitem{Hunter}
J.~K. Hunter.
\newblock Nonlinear surface waves.
\newblock In {\em Current progress in hyberbolic systems: Riemann problems and
  computations (Brunswick, ME, 1988)}, volume 100 of {\em Contemp. Math.},
  pages 185--202. Amer. Math. Soc., 1989.

\end{thebibliography}
\end{document}